\documentclass[a4paper,11pt]{amsart}

\usepackage{amsmath,amssymb}
\usepackage{amscd}
\usepackage[all,cmtip]{xy}
\usepackage{framed}
\usepackage{hyperref}
\usepackage[inline]{enumitem}  
\usepackage{float}
\usepackage{multicol}
\usepackage{array}
\usepackage{makecell}
\newcolumntype{P}[1]{>{\centering\arraybackslash}p{#1}}
\newcolumntype{M}[1]{>{\centering\arraybackslash}m{#1}}
\usepackage{lscape}
\usepackage{times}

\usepackage{setspace}

\theoremstyle{plain}
\newtheorem{thm}{Theorem}[section]
\newtheorem{theorem}[thm]{Theorem}

\newtheorem{lemma}[thm]{Lemma}

\newtheorem{proposition}[thm]{Proposition}

\theoremstyle{definition}

\newtheorem{remark}[thm]{Remark}

\newtheorem*{claim*}{Claim}
\newtheorem*{claimproof*}{Proof of Claim}
\newtheorem*{notation*}{Notation}

\newtheorem{example}[thm]{Example}

\numberwithin{equation}{section}


\newcommand{\Q}{{\mathbb Q}}
\newcommand{\R}{{\mathbb R}}
\newcommand{\Z}{{\mathbb Z}}

\newcommand{\Aut}{\operatorname{Aut}}

\newcommand{\rank}{\operatorname{rank}}
\newcommand{\ord}{\operatorname{ord}}
\newcommand{\disc}{\operatorname{disc}}
\newcommand{\id}{\operatorname{id}}

\newcommand{\trace}{\operatorname{trace}}
\newcommand{\transpose}{\operatorname{Tr}}


\begin{document}

\title[Lucas sequences, Pell's equations, and automorphisms of K3 surfaces]{Lucas sequences, Pell's equations, and automorphisms of K3 surfaces}


\author{Kwangwoo Lee}
\thanks{This paper has been published in 
\emph{The Ramanujan Journal}, Vol. 68 (2025), \url{https://doi.org/10.1007/s11139-025-01257-6}. This work was carried out while the author was at Korea Institute for Advanced Study.}

\address{Center for Complex Geometry, Institute for Basic Science (IBS), Daejeon 34126, Republic of Korea}
\email{klee0222@gmail.com}

\subjclass{11B39, 11D45, 14J28, 14J50}
\keywords{Lucas sequence, Pell's equation, K3 surface, Automorphism}

\maketitle

\begin{abstract}
We have the correspondences between Lucas sequences, Pell's equations, and the automorphisms of K3 surfaces with Picard number $2$. Using these correspondences, we determine the intersections of some Lucas sequences.
\end{abstract}

\section{Introduction}
In \cite[Theorem 1.1]{L2} (see Theorem \ref{Case Q=-1}) , the author determined the automorphism group of a K3 surface $X_{L_m(a)}$ whose Picard lattice $L_m(a)$ has the intersection matrix
\begin{equation}\label{L_m(a)}
m\begin{pmatrix} 2 & a \\ a & -2 \end{pmatrix},~\ a\in \Z_{\geq 1} ~\ \text{ and } ~\ m\in \Z_{\geq 2},
\end{equation}
using the properties of the generalized Fibonacci sequence $\{a_n\}$ such that 
\begin{equation}\label{a_n}
a_0=0, ~\ a_1=1, ~\ \text{ and } ~\ a_{n}=a\cdot a_{n-1}+a_{n-2}, ~\ n\geq 2.
\end{equation}
Moreover, in \cite{GLP}, Galluzzi, Lombardo, and Peters found the automorphism group of a K3 surface $X_{L(b)}$ whose Picard lattice $L(b)$ has the intersection matrix
\begin{equation}\label{L(b)}
\begin{pmatrix} 2& b\\ b& 2\end{pmatrix},~\ b\in \Z_{\geq 4}.
\end{equation}

 These results give us the correspondences (see Theorem \ref{correspondence of three items}) between the automorphisms of K3 surfaces with the Picard lattice $L_m(a)$ (or $L(b)$), the generalized Fibonacci numbers $a_n$ (or $b_n$), and the solutions of the $y$-coordinate of \textit{Pell's equation} $x^2-(a^2+4)y^2=\pm 4$ (or $x^2-(b^2-4)y^2=4$) (see Section \ref{section of Pell's equations} for Pell's equation). 
Here $\{b_n\}$ is the generalized Fibonacci sequence such that for $b\in \Z_{\geq 4}$,
\begin{equation}\label{b_k}
b_0=0, ~\ b_1=1, ~\ \text{ and } ~\ b_{n}=b\cdot b_{n-1}-b_{n-2}, ~\ n\geq 2.
\end{equation}
 \begin{theorem} \label{correspondence of three items}
There are the correspondences between the following sets.
\begin{enumerate}
\item The set of all generalized Fibonacci numbers $a_n$ ($b_n$, respectively) $n\geq 2$.
\item The set of all solutions of the $y$-coordinate of $x^2-(a^2+4)y^2=\pm 4$ ~\  $(x^2-(b^2-4)y^2=4$, respectively).
\item The set of all pairs $(X_{L_m(a)},g)$ of a K3 surface $X_{L_m(a)}$ and its automorphism $g$ for some $m (\geq 2)$ with $\trace g^*\vert_{L_m(a)}=(a^2+4)a_n^2+(-1)^n 2$ and $g^*\omega_X=(-1)^n\omega_X$ 
~\ ($(X_{L(b)},g)$ with $\trace g^*\vert_{L(b)}=(b^2-4)b_n^2+2$ and $g^*\omega_X=\omega_X$, respectively). 
\end{enumerate}
\end{theorem}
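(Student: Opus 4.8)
The plan is to factor the claimed correspondences through the middle set, proving $(1)\leftrightarrow(2)$ by elementary number theory and $(2)\leftrightarrow(3)$ by invoking the structure of $\Aut(X)$ supplied by Theorem \ref{Case Q=-1} and \cite{GLP}. First I would introduce the companion Lucas sequence $\{v_n\}$ to $\{a_n\}$, defined by $v_0=2$, $v_1=a$, and $v_n=a\,v_{n-1}+v_{n-2}$, and analogously the companion $\{w_n\}$ to $\{b_n\}$ with $w_0=2$, $w_1=b$, $w_n=b\,w_{n-1}-w_{n-2}$. A single induction on $n$ then yields the fundamental identities
\begin{equation*}
v_n^2-(a^2+4)a_n^2=4(-1)^n, \qquad w_n^2-(b^2-4)b_n^2=4,
\end{equation*}
so that $(x,y)=(v_n,a_n)$ solves $x^2-(a^2+4)y^2=\pm4$ and $(x,y)=(w_n,b_n)$ solves $x^2-(b^2-4)y^2=4$. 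This produces the map $(1)\to(2)$.

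For the reverse direction and for bijectivity I would appeal to the structure theory recalled in Section \ref{section of Pell's equations}: every solution of the Pell equation is a power of the fundamental solution under the group law on $\tfrac{x+y\sqrt{D}}{2}$, and the fundamental solution is exactly $(v_1,a_1)=(a,1)$ in $\Q(\sqrt{a^2+4})$, respectively $(w_1,b_1)=(b,1)$ in $\Q(\sqrt{b^2-4})$. Matching the recursion defining $\{a_n\}$ against the recursion generated by multiplication by the fundamental unit shows that the $y$-coordinates of all solutions are precisely $\{a_n\}_{n\geq2}$, respectively $\{b_n\}_{n\geq2}$, and that distinct $n$ give distinct values; this establishes $(1)\leftrightarrow(2)$. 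Note that $N(\tfrac{a+\sqrt{a^2+4}}{2})=-1$, which is what forces the sign alternation $4(-1)^n$, whereas $N(\tfrac{b+\sqrt{b^2-4}}{2})=+1$ keeps the right-hand side constant.

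The correspondence $(2)\leftrightarrow(3)$ is where the geometry enters. By Theorem \ref{Case Q=-1}, respectively \cite{GLP}, the relevant automorphisms are powers of a single fundamental automorphism whose action on the rank-$2$ lattice is a determinant-one isometry realizing the \emph{square} of the fundamental unit (only the even powers lift, as dictated by the discriminant-form gluing and the action on the transcendental lattice). For the automorphism attached to index $n$ I would compute $\trace g^*$ as the trace of the $n$-th power, namely $v_{2n}$ respectively $w_{2n}$, and then invoke the Lucas double-angle identities $v_{2n}=v_n^2-2(-1)^n$ and $w_{2n}=w_n^2-2$ together with the fundamental identities above to rewrite
\begin{equation*}
\trace g^*\vert_{L_m(a)}=v_{2n}=(a^2+4)a_n^2+(-1)^n2,
\end{equation*}
and likewise $\trace g^*\vert_{L(b)}=w_{2n}=(b^2-4)b_n^2+2$, exactly as required. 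The sign $g^*\omega_X=(-1)^n\omega_X$ then follows because the fundamental automorphism is anti-symplectic ($g_0^*\omega_X=-\omega_X$), so its $n$-th power scales $\omega_X$ by $(-1)^n$; in the $b$-case the fundamental automorphism is symplectic and the sign stays $+$, matching $Q=1$.

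The main obstacle I anticipate is not any single identity but the bijectivity and realizability bookkeeping. I must verify, using \cite{L2}, that for each $n$ there is an admissible $m\geq2$ for which $L_m(a)$ embeds primitively into the K3 lattice with signature $(1,1)$ and for which the abstract isometry is realized by an honest automorphism through the Torelli theorem, and conversely that every pair of the stated type determines its index $n$ uniquely via the trace. The delicate point is threading the factor of two in $v_{2n}$ versus $v_n$—reflecting that automorphisms realize only the squares of the fundamental unit—consistently through all three sets, so that the three index sets are matched by a genuine bijection rather than merely mapped; this alignment, rather than the closed-form computations, is what requires care.
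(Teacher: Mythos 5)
Your proposal is correct in substance but organizes the proof differently from the paper, and the comparison is instructive. The paper proves the correspondences as a cycle $(1)\to(3)\to(2)\to(1)$: for the $a$-case it starts from $a_n$, picks $m\mid a_n$, invokes Theorem \ref{Case Q=-1} to get the pair, reads off the trace via $(AB)^n=M_a^{2n}$ and Lemma \ref{properties of Fibonacci seq}, extracts the Pell solution from Proposition \ref{PROP_key_prop}, and then closes the cycle $(2)\to(1)$ by citing the Whitney-type criterion (Theorem \ref{theorem1}) as a black box; for the $b$-case it closes $(2)\to(1)$ by a geometric detour through \cite{HKL} and Proposition \ref{PROP_generator_of_isometry}. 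You instead factor through the middle set: $(1)\leftrightarrow(2)$ is done purely by Lucas/Pell theory (the identity $V_n^2-DU_n^2=4Q^n$, which is the paper's equation just before \eqref{Pell's equation}, plus Lemma \ref{Pell equation} and the observation that $(a,1)$, resp.\ $(b,1)$, is the fundamental solution), and $(2)\leftrightarrow(3)$ carries all the geometry. Your route is more uniform and self-contained: it proves the $b$-case Whitney criterion (Remark \ref{Whitney theorem for b_k}) directly instead of deriving it from the geometric theorem, and it does not need Theorem \ref{theorem1} as an external input. What it does not avoid — and cannot — is the realizability input (Theorem \ref{Case Q=-1}, \cite{GLP}, Torelli), which both proofs use in essentially the same way. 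Your trace computation via $v_{2n}=v_n^2-2(-1)^n$ and $w_{2n}=w_n^2-2$ is equivalent to the paper's $a_{2n-1}+a_{2n+1}$ and $b_{2n+1}-b_{2n-1}$ identities, just packaged through the companion sequence.

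One imprecision to fix in the $a$-case of your $(2)\leftrightarrow(3)$ step: there is no ``single fundamental automorphism'' whose powers realize all $(AB)^n$, because the surface itself varies with $m$. For fixed $m\geq 2$ the generator of $\Aut(X_{L_m(a)})\cong\Z$ acts as $(AB)^{n_0}$ where $n_0$ is the minimal index with $m\mid a_{n_0}$, and this generator is anti-symplectic only when $n_0$ is odd — so your blanket claim that the fundamental automorphism satisfies $g_0^*\omega_X=-\omega_X$ is false for some $m$ (e.g.\ when $m\mid a_2$ but $m\nmid a_1$). The conclusion $g^*\omega_X=(-1)^n\omega_X$ is nevertheless correct, since $\bigl((-1)^{n_0}\bigr)^{k}=(-1)^{kn_0}$ for $n_0$ odd and both sides are $+1$ for $n_0$ even; but the clean way to get it is simply to quote the parity dichotomy of Theorem \ref{Case Q=-1} for each chosen $m\mid a_n$, exactly as the paper does. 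You correctly identify this $m$-dependent bookkeeping as the delicate point in your final paragraph, so this is a matter of tightening the statement rather than a gap in the strategy.
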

~\

Moreover, using these relations, we solve some systems of Pell's equations:
\begin{equation}\label{system 1}
  \left\{\begin{array}{@{}l@{}}
    x^2-D_1y^2=\epsilon_1 4\\
    x^2-D_2z^2=\epsilon_2 4
  \end{array}\right.\,,
\end{equation}
where $D_i$ is either $a^2+4$ or $b^2-4$, and $\epsilon_i$ is either $1$ or $-1$ ($i=1,2$) (see Section \ref{systems of Pell's equations}).

The structure of this paper is the following: in Section \ref{preliminaries}, we recall some results on Lucas sequences, lattices, K3 surfaces, and Pell's equations. In Section \ref{proof}, we prove Theorem \ref{correspondence of three items}. In Section \ref{systems of Pell's equations}, we solve some systems of Pell's equations given as in \eqref{system 1}.

\section{Preliminaries}\label{preliminaries}
\subsection{Lucas sequences}
Our generalized Fibonacci sequences $\{a_k\}$ and $\{b_k\}$ are some kinds of Lucas sequences defined as follows. The pair of \textit{Lucas sequences} $\{U_n(P, Q)\}$ and $\{V_n(P, Q)\}$ are defined by the same linear recurrent relation with the coefficients $P, Q\in \Z$ but different initial terms:
\begin{equation*}
\label{Lucas seq}
\begin{gathered} 
    U_0(P,Q)=0, ~\ U_1(P,Q)=1, ~\ U_{n+1}(P,Q)=PU_n(P,Q)-QU_{n-1}(P,Q),~\ n\geq 1;\\
     V_0(P,Q)=2, ~\ V_1(P,Q)=P, ~\ V_{n+1}(P,Q)=PV_n(P,Q)-QV_{n-1}(P,Q), ~\ n\geq 1.
  \end{gathered}
\end{equation*}
For example, $U_n(1,-1)$ are the Fibonacci numbers, $V_n(1,-1)$ are the Lucas numbers, $U_n(2,-1)$ are the Pell numbers, and $V_n(2,-1)$ are the Pell-Lucas numbers. Note that the generalized Fibonacci sequences $\{a_n\}=\{U_n(a,-1)\}$ and $\{b_n\}=\{U_n(b, 1)\}$.

The characteristic polynomial of the Lucas sequences $\{U_n(P, Q)\}$ and $\{V_n(P, Q)\}$ is $x^2-Px+Q$ with the \textit{discriminant} $D:=P^2-4Q$. A Lucas sequence $\{U_n(P, Q)\}$ or $\{V_n(P,Q)\}$ is called \textit{degenerate} if $n$-th term is $0$ for some positive $n$. Therefore, a sequence that is not zero for all $n\geq 1$ is called \textit{non-degenerate}.  For non-degenerate sequences, the discriminant $D$ is a positive non-square integer.

Let $\alpha=\frac{P+\sqrt{D}}{2}$ and $\beta=\frac{P-\sqrt{D}}{2}$ be the roots of the characteristic polynomial, then the following explicit (Binet-type) formulas take place
\begin{equation}
\begin{gathered}
  Q=\alpha\beta, ~\ D=(\alpha-\beta)^2, ~\ V_n(P,Q)=\alpha^n+\beta^n,\\
  U_n(P,Q)=\frac{\alpha^n-\beta^n}{\alpha-\beta} ~\ ~\ \text{ if } ~\ \alpha\neq \beta ~\ \text{ or } ~\ U_n(P,Q)=n\alpha^{n-1} ~\ \text{ if } ~\ \alpha=\beta.
\end{gathered}
\end{equation}
In particular, these formulas imply that
\begin{equation}
V_n(P,Q)^2-DU_n(P,Q)^2=4Q^n.
\end{equation}

For $|Q|=1$, the pairs $(V_n(P,Q),U_n(P,Q))$ form the solutions to Pell's equation:
\begin{equation}\label{Pell's equation}
x^2-Dy^2=\pm 4.
\end{equation}

The following theorem generalizes the result of Whitney; in \cite{W}, he showed that a given integer $n$ is a Fibonacci number if and only if either $5n^2+4$ or $5n^2-4$ is a square number. 
\begin{theorem}\cite[Theorem 1.3]{L2}\label{theorem1}
$n$ is the $k$-th generalized Fibonacci number $a_k$ and $k$ is even (resp. odd) if and only if $(a^2+4)n^2+4$ (resp. $(a^2+4)n^2-4$) is a perfect square.
\end{theorem}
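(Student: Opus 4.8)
The plan is to derive the theorem from the single identity
\begin{equation*}
V_n(a,-1)^2-(a^2+4)\,a_n^2=4(-1)^n,
\end{equation*}
which is the specialization of $V_n(P,Q)^2-DU_n(P,Q)^2=4Q^n$ to $P=a$, $Q=-1$, $D=a^2+4$. The forward direction is then immediate: if $n=a_k$, the identity gives $(a^2+4)n^2+4(-1)^k=V_k(a,-1)^2$, a perfect square, which reads $(a^2+4)n^2+4$ for $k$ even and $(a^2+4)n^2-4$ for $k$ odd. So existence of $k$ with the stated parity forces the square, and the whole content of the theorem is the converse, namely that the square forces such a $k$.

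For the converse the key is to classify all integer solutions of $x^2-(a^2+4)y^2=\pm 4$. Put $D=a^2+4$, $\alpha=\frac{a+\sqrt D}{2}$, $\beta=\frac{a-\sqrt D}{2}$, so that $\alpha\beta=-1$, $\alpha-\beta=\sqrt D$, $\alpha^{-1}=-\beta$, and $V_k(a,-1)+a_k\sqrt D=2\alpha^k$. Suppose $(a^2+4)n^2+\varepsilon\,4=s^2$ with $\varepsilon\in\{\pm 1\}$; replacing $s,n$ by their absolute values we may take $s,n\geq 0$, so $(s,n)$ is a solution of $x^2-Dy^2=4\varepsilon$. I would show that every such solution has $y=a_k$ for some $k\geq 0$ with $\varepsilon=(-1)^k$; this pins down the parity of $k$ from the sign $\varepsilon$ and finishes the proof.

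The classification is the crux, and I would prove it by descent in the order $\Z[\alpha]$, with conjugation $\overline{\frac{p+q\sqrt D}{2}}=\frac{p-q\sqrt D}{2}$ and the multiplicative norm $\gamma\bar\gamma$. First, any solution has $x\equiv ay\pmod 2$ (since $x^2\equiv a^2y^2\pmod 4$ and $D\equiv a^2\pmod 4$), so $\xi:=\frac{x+y\sqrt D}{2}\in\Z[\alpha]$ with norm $\xi\bar\xi=\frac{x^2-Dy^2}{4}=\varepsilon$, and $\xi\geq 1$ (indeed $\xi=1$ if $y=0$, and $\xi\geq\frac{1+\sqrt D}{2}>1$ if $y\geq 1$). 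Choosing the unique $k\geq 0$ with $\alpha^k\leq\xi<\alpha^{k+1}$, the element $\eta:=\xi\alpha^{-k}\in\Z[\alpha]$ has norm $\pm 1$ and lies in $[1,\alpha)$. The main obstacle is the fundamental-domain lemma: the only $\eta=\frac{x'+y'\sqrt D}{2}\in\Z[\alpha]$ of norm $\pm 1$ with $1\leq\eta<\alpha$ is $\eta=1$. Granting it, $\xi=\alpha^k$, whence $(s,n)=(V_k(a,-1),a_k)$ and $\varepsilon=(\alpha\beta)^k=(-1)^k$, exactly as required.

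To settle the lemma I would write $y'\sqrt D=\eta-\bar\eta$ with $\bar\eta=(\pm 1)/\eta$ and exploit monotonicity of $t\mapsto t\pm t^{-1}$. If $\eta$ has norm $+1$, then $y'\sqrt D=\eta-\eta^{-1}\in[0,\alpha-\alpha^{-1})=[0,a)$, and since $\sqrt D>a$ this forces $y'=0$ and hence $\eta=1$; if $\eta$ has norm $-1$, then $y'\sqrt D=\eta+\eta^{-1}\in[2,\alpha+\alpha^{-1})=[2,\sqrt D)$, forcing $y'=0$, which is incompatible with norm $-1$. Here I have used $\alpha-\alpha^{-1}=\alpha+\beta=a$ and $\alpha+\alpha^{-1}=\alpha-\beta=\sqrt D$. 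I expect this lemma to be the only real work; the one point needing care is the bookkeeping that keeps $\eta$ inside $\Z[\alpha]$ with integer coordinates (multiplicativity under the unit $\alpha^{-1}$), while the rest is elementary. This is precisely the statement that $\alpha$ is the fundamental unit of the relevant form, so that the solution lattice is generated by it.
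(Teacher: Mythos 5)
Your proof is correct, but it is worth noting that this paper never proves Theorem \ref{theorem1} at all: it is quoted from \cite[Theorem 1.3]{L2}, so the fair comparison is with the toolkit the paper itself assembles. Your forward direction is exactly the paper's identity $V_k(P,Q)^2-DU_k(P,Q)^2=4Q^k$ specialized to $(P,Q)=(a,-1)$. For the converse, the route suggested by the paper is much shorter by citation: invoke Lemma \ref{Pell equation} (Buell's classification of all solutions of $u^2-dv^2=\pm4$ as powers of the fundamental solution), observe that $(a,1)$ is trivially the minimal positive solution of the negative equation $u^2-(a^2+4)v^2=-4$ (its $v$-coordinate is already $1$), and read off that every solution is $2\alpha^k=V_k(a,-1)+a_k\sqrt{D}$ with sign $(-1)^k$. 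Your fundamental-domain lemma --- that $1$ is the only element of $\Z[\alpha]$ of norm $\pm1$ in $[1,\alpha)$ --- is precisely a proof of that cited classification, and your descent (the parity check $x\equiv ay \pmod 2$ putting $\xi$ in $\Z[\alpha]$, the unit $\alpha^{-1}=-\beta$ keeping $\eta$ integral, and the monotonicity of $t\mp t^{-1}$ with $\alpha-\alpha^{-1}=a<\sqrt{D}=\alpha+\alpha^{-1}$) is sound. What your approach buys is self-containedness, replacing the appeal to \cite{B} by an elementary argument; what the citation buys is brevity. Two small points to tidy: in the norm $-1$ case the conclusion is that \emph{no} integer $y'$ exists (a positive $y'$ gives $y'\sqrt{D}\geq\sqrt{D}$, a nonpositive one gives $y'\sqrt{D}\leq 0<2$), rather than ``forcing $y'=0$''; and the final identification $\xi=\alpha^k\Rightarrow(x,y)=(V_k,a_k)$ uses that $\sqrt{D}$ is irrational, which should be recorded explicitly (for $a\geq 1$ the number $a^2+4$ is never a perfect square, since $c^2-a^2=4$ has no solution with $a\geq1$).
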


\begin{remark}\label{Whitney theorem for b_k}
Theorem \ref{correspondence of three items} also gives the criterion that a given integer $n$ is the $k$-th generalized Fibonacci number $b_k$ if and only if $(b^2-4)n^2+4$ is a perfect square.
\end{remark}



\subsection{Lattices}
A \textit{lattice} is a free $\Z$-module $L$
 of finite rank equipped with a symmetric bilinear form
 $\langle ~,~ \rangle \colon L\times L\rightarrow \Z$.
If $x^2:=\langle x,x\rangle \in 2\Z$ for any $x\in L$,
 a lattice $L$ is said to be even. 
We fix a $\Z$-basis of $L$ and identify the lattice $L$
 with its intersection matrix $Q_L$ under this basis.
The \textit{discriminant} $\disc(L)$ of $L$ is defined as $\det(Q_L)$,
 which is independent of the choice of basis.
A lattice $L$ is called \textit{non-degenerate} if
 $\disc(L)\neq 0$ and \textit{unimodular} if $\disc(L)=\pm1$.
For a non-degenerate lattice $L$,
 the signature of $L$ is defined as $(s_+,s_-)$,
 where $s_{+}$ (resp.\ $s_-$)
 denotes the number of the positive (resp.\ negative) eigenvalues of
 $Q_L$.
An \textit{isometry} of $L$ is an automorphism of the $\Z$-module $L$
 preserving the bilinear form.
The \textit{orthogonal group} $O(L)$ of $L$ consists of the isometries of $L$ and we have
 the following identification:
\begin{equation} \label{EQ_ortho}
 O(L)= \{ g\in GL_n(\Z) \bigm| g^T \cdot Q_L \cdot g=Q_L \}, \quad
 n=\rank L.
\end{equation}
For a non-degenerate lattice $L$,
 the \textit{discriminant group} $A(L)$ of $L$ is defined by
\begin{equation} 
 A(L):=L^*/L, \quad
 L^*:=
 \{ x\in L\otimes \Q \bigm|
 \langle x,y \rangle \in \Z ~\ \forall y\in L \}.
\end{equation}
For a non-degenerate lattice $L$ of signature $(1,k)$ with $k\geq 1$,
 We have the decomposition
\begin{equation} \label{EQ_positive_cone}
 \{x\in L\otimes \R \bigm| x^2 >0 \}=C_L\sqcup (-C_L)
\end{equation}
 into two disjoint cones.
Here, $C_L$ and $-C_L$ are connected components, and $C_L$ is called the \textit{positive cone}.
We define
\begin{equation} \label{O^+}
 O^+(L):=\{g\in O(L) \bigm| g(C_L)=C_L \}, \quad
 SO^+(L):=O^+(L)\cap SO(L),
\end{equation}
 where $SO(L)$ is the subgroup of $O(L)$
 consisting of isometries of determinant $1$.
The group $O^+(L)$ is a subgroup of $O(L)$ of index $2$.

\begin{lemma}\cite[Lemma 1]{HKL}\label{criterion}
Let $L$ be a non-degenerate even lattice of rank $n$. For $g\in O(L)$ and $\epsilon\in\{\pm 1\}$, $g$ acts on $A(L)$ as $\epsilon\cdot \id$ if and only if $(g-\epsilon\cdot I_n)\cdot Q_L^{-1}$ is an integer matrix. 
\end{lemma}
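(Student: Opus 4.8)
The plan is to work entirely in the fixed $\Z$-basis that identifies $L$ with $\Z^n$ and the bilinear form with $\langle x,y\rangle = x^T Q_L y$, and to reduce everything to a concrete matrix description of the dual lattice $L^*$. First I would record that, since $Q_L$ is symmetric, a vector $v\in L\otimes\Q = \Q^n$ lies in $L^*$ precisely when $\langle v, e_i\rangle = (Q_L v)_i \in \Z$ for each standard basis vector $e_i$; hence
\[
L^* = \{ v \in \Q^n \bigm| Q_L v \in \Z^n\} = Q_L^{-1}\Z^n .
\]

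Second, I would check that $g$ induces a well-defined map on $A(L) = L^*/L$. Because $g \in GL_n(\Z)$ we have $\det g = \pm 1$ and $g^{-1}\in GL_n(\Z)$, and the isometry relation $g^T Q_L g = Q_L$ rearranges to $Q_L g = (g^{-1})^T Q_L$. For $v \in L^* = Q_L^{-1}\Z^n$ this yields $Q_L (g v) = (g^{-1})^T (Q_L v) \in \Z^n$, so $g v \in L^*$; thus $g$ preserves both $L^*$ and $L$, and the action on $A(L)$ is defined.

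Third, I would translate the hypothesis directly. By definition $g$ acts on $A(L)$ as $\epsilon\cdot\id$ if and only if $g v - \epsilon v \in L = \Z^n$, that is $(g-\epsilon I_n)v \in \Z^n$, for every $v \in L^*$. Writing $v = Q_L^{-1} w$ with $w$ ranging over all of $\Z^n$, this condition becomes $(g-\epsilon I_n)Q_L^{-1} w \in \Z^n$ for all $w\in\Z^n$, which, by testing on the standard basis vectors and conversely using $\Z$-linearity, is exactly the assertion that the matrix $(g-\epsilon I_n)Q_L^{-1}$ has integer entries. This establishes both implications at once.

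I do not expect a genuine obstacle: the entire content is the dictionary $L^* = Q_L^{-1}\Z^n$ between the dual lattice and the inverse Gram matrix, together with the elementary fact that an integer matrix maps every integer vector into $\Z^n$ if and only if the matrix itself is integral. The only points that need a little care are the well-definedness of the $A(L)$-action (for which non-degeneracy, ensuring $Q_L$ is invertible, is used) and the observation that $\epsilon = \pm 1$ keeps $\epsilon I_n$ integral so that the difference $(g-\epsilon I_n)Q_L^{-1}$ is meaningful. Notably, the evenness of $L$ plays no role in the argument and is presumably retained only because $L$ is even in the intended application.
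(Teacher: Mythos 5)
The paper gives no proof of this lemma at all---it is quoted directly from \cite[Lemma 1]{HKL}---so there is no in-paper argument to compare against, only the standard one, which is exactly what you give. Your proof is correct and complete: the dictionary $L^* = Q_L^{-1}\Z^n$, the verification that isometries preserve $L^*$ (via $Q_L g = (g^{-1})^T Q_L$), and the reduction of the congruence $gv \equiv \epsilon v \pmod{L}$ for all $v\in L^*$ to integrality of the matrix $(g-\epsilon I_n)Q_L^{-1}$ constitute precisely the intended argument, and your closing remark that evenness of $L$ is never used (only non-degeneracy, so that $Q_L^{-1}$ exists) is also accurate.
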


\begin{proposition} (\cite[Thm 87]{D},\cite[Thm 50, Thm 51c]{J}) \label{PROP_generator_of_isometry}
Let $L$ be an even lattice of signature $(1,1)$:
\begin{equation}
 L= \begin{pmatrix} 2a& b\\ b&2c \end{pmatrix}, \quad
 D=-\disc(L)=b^2-4 ac>0.
\end{equation}
Then $SO^+(L)$ consists of the elements of the form
\begin{equation} \label{EQ_u}
 g=
 \begin{pmatrix}
  (u-bv)/2 & -cv \\
  av & (u+bv)/2
 \end{pmatrix}.
\end{equation}
Here, $(u,v)$ is any solution of the positive Pell equation
\begin{equation} \label{EQ_Pell}
 u^2-D v^2=4
\end{equation}
 with $u,kv\in\Z$ and $u>0$, where $k:=\operatorname{gcd}(a,b,c)$.
If $D$ is a square number (resp. not a square number),
 then $SO^+(L)$ is isomorphic to a trivial group (resp.\ $\Z$).
\end{proposition}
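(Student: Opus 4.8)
The plan is to determine directly which integral matrices $g=\begin{pmatrix} p & q\\ r & s\end{pmatrix}$ with $\det g=1$ satisfy $g^{T}Q_Lg=Q_L$, where $Q_L=\begin{pmatrix} 2a & b\\ b& 2c\end{pmatrix}$. Since $\det g=1$, the isometry condition is equivalent to $Q_Lg=(g^{-1})^{T}Q_L$, and comparing entries yields the linear relations
\begin{equation*}
a(p-s)=-br,\qquad aq=-cr,\qquad bq=c(p-s).
\end{equation*}
I would read these as a homogeneous system $M\,(p-s,\,q,\,r)^{T}=0$ for the triple $(p-s,q,r)$, with coefficient matrix $M=\begin{pmatrix} a & 0 & b\\ 0 & a & c\\ -c & b & 0\end{pmatrix}$.

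The matrix $M$ has $\det M=0$ and, since $L$ is non-degenerate (so $(a,b,c)\neq(0,0,0)$), rank exactly $2$; its kernel is the line spanned by $(-b,-c,a)$. Hence there is a unique $v\in\Q$ with
\begin{equation*}
p-s=-bv,\qquad q=-cv,\qquad r=av,
\end{equation*}
and setting $u:=\trace g=p+s$ recovers $g$ in the stated shape. Two checks then remain. First, $u^{2}-Dv^{2}=4$: since $4ps=(p+s)^{2}-(p-s)^{2}=u^{2}-b^{2}v^{2}$ and $qr=-acv^{2}$, the condition $\det g=ps-qr=1$ reads $\tfrac14\bigl(u^{2}-(b^{2}-4ac)v^{2}\bigr)=1$. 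Second, $kv\in\Z$ with $k=\gcd(a,b,c)$: writing $(a,b,c)=k(a',b',c')$ with $\gcd(a',b',c')=1$ and choosing a B\'ezout relation $\lambda(-b')+\mu(-c')+\nu a'=1$, one gets $kv=\lambda(p-s)+\mu q+\nu r\in\Z$. Conversely, for any solution of $u^{2}-Dv^{2}=4$ with $u,kv\in\Z$ the four entries are integers—the divisibility by $2$ of $u\mp bv$ follows from $u\equiv bv\pmod 2$, forced by the Pell relation modulo $2$—and a direct substitution (using $u^{2}-Dv^{2}=4$) shows that such a matrix has determinant $1$ and satisfies $g^{T}Q_Lg=Q_L$. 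This gives a bijection between $SO(L)$ and the full solution set of $u^{2}-Dv^{2}=4$ with $u,kv\in\Z$.

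It remains to cut out $SO^{+}(L)$ and to identify the group. I would note that $g(-u,-v)=-g(u,v)$, and that $-I$ lies in $SO(L)$ but exchanges $C_L$ and $-C_L$, so $-I\notin SO^{+}(L)$; thus of the two solutions $\pm(u,v)$ exactly one lands in $SO^{+}(L)$. Since $(u,v)=(2,0)$ yields the identity, which preserves $C_L$, the correct branch is $u>0$, and the parametrization $(u,v)\mapsto g(u,v)$ is multiplicative for the natural \emph{Pell multiplication} coming from $\tfrac{u+v\sqrt{D}}{2}\in\Z[\sqrt{D}]$; the totally positive norm-$1$ units (equivalently $u>0$) form the subgroup mapping onto $SO^{+}(L)$. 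Finally, the classical theory of Pell's equation gives the group structure: if $D$ is a perfect square the factorization $(u-\sqrt{D}\,v)(u+\sqrt{D}\,v)=4$ forces $v=0$, leaving only $u=2$ and the trivial group, whereas for non-square $D$ a fundamental solution generates a group $\cong\Z$.

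The routine algebra (multiplying out $g^{T}Q_Lg$ and the back-substitution) I expect to be painless; the two places needing care are the integrality statement $kv\in\Z$, where the $\gcd$ must be tracked through the B\'ezout step rather than assumed, and the identification of the branch $u>0$ with preservation of $C_L$, where one must argue through $-I$ and the multiplicativity of the parametrization rather than by a single sign computation.
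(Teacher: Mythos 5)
The paper gives no proof of this proposition at all---it is quoted as classical, with citations to Dickson and Jones---so your derivation has to stand on its own. Most of it does: the reduction of $g^{T}Q_Lg=Q_L$, $\det g=1$ to the linear system with kernel spanned by $(-b,-c,a)$, the recovery of $u^{2}-Dv^{2}=4$ from $\det g=1$, the B\'ezout argument giving $kv\in\Z$, the parity argument modulo $2$ for integrality of $(u\pm bv)/2$ in the converse direction, and the multiplicativity of $(u,v)\mapsto g_{u,v}$ (which is immediate once one writes $g_{u,v}=\tfrac12\bigl(uI+vT\bigr)$ with $T=\begin{pmatrix}-b&-2c\\2a&b\end{pmatrix}$, $T^{2}=DI$) are all correct. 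The square-$D$ case is also fine, modulo the small observation that $\sqrt{D}\,v$ is an integer (it is rational and its square is $u^{2}-4$), so the factorization of $4$ really is a factorization into integers.

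The genuine gap is exactly at the step you chose to handle by ``$-I$ and multiplicativity rather than a single sign computation'': that mechanism cannot identify $SO^{+}(L)$ with the branch $u>0$. In the non-square case the solution group $G$ is isomorphic to $\Z\times\Z/2$, generated by the fundamental solution $\alpha$ (which has $u$-coordinate $>0$) together with $-(2,0)$, and the cone-behavior map $\phi\colon G\to\{\pm1\}$ is a homomorphism with $\phi(-(2,0))=-1$. But $G$ has \emph{two} index-$2$ subgroups avoiding $-(2,0)$: the branch $\{\alpha^{n}\}=\{u>0\}$ and the twisted subgroup $\{\alpha^{2n}\}\cup\{-\alpha^{2n+1}\}$. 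Everything you establish---$-I\notin SO^{+}(L)$, $I\in SO^{+}(L)$, exactly one of $\pm(u,v)$ preserves $C_L$, multiplicativity---is equally consistent with $\ker\phi$ being either subgroup; if $\phi(\alpha)=-1$ the proposition would fail, and no formal group theory rules this out. What is missing is the one concrete check that the fundamental automorph itself preserves $C_L$, and this is best done by the kind of computation you disclaimed: for $v\neq0$ the eigenvalues of $g_{u,v}$ are $(u\pm v\sqrt{D})/2$, both positive precisely when $u>0$ (since $u^{2}=4+Dv^{2}>Dv^{2}$), and the corresponding eigenvectors span the two isotropic lines; an isometry acting on each null line by a positive scalar preserves each of the four sectors they bound, in particular $C_L$. (Equivalently: the real branch $\{u^{2}-Dv^{2}=4,\ u>0\}$ is connected, contains $(2,0)$, and maps continuously into $SO(L\otimes\R)$, hence into the identity component.) With that two-line insertion your proof is complete.
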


\subsection{K3 surfaces}
A \textit{K3 surface} is a simply connected compact complex manifold $X$ such that $H^0(X,\Omega_X^2)=\mathbb{C}\omega_X$, where $\omega_X$ is an everywhere non-degenerate holomorphic $2$-form on $X$. 

In \cite{HKL}, we have the following result.
\begin{proposition} \label{PROP_key_prop}
Let $X$ be a projective K3 surface with Picard lattice $NS(X)$ of Picard number $2$.
For $\varepsilon\in \{ \pm 1 \}$ and $g \in O(NS(X))$,
 the following conditions are equivalent:
\begin{enumerate}
\item
$g=\varphi^*|_{NS(X)}$ for some $\varphi \in \Aut(X)$
 such that $\ord(\varphi)=\infty$ and
 $\varphi^* \omega_X=\varepsilon \omega_X$.
\item
$NS(X)$ has no $(0)$- or $(-2)$-elements and
 $g$ is given by
\begin{equation}
 g=g_{u,v}:=\begin{pmatrix}
  (u-bv)/2 & -cv \\
  av & (u+bv)/2
 \end{pmatrix},
\end{equation}
 where
\begin{equation}\label{trace}
 NS(X)=\begin{pmatrix}2a&b\\b&2c\end{pmatrix}, \quad
 (u,v)=(\alpha^2-2\varepsilon,\alpha\beta),
\end{equation}
 and $\alpha,\beta$ are nonzero integers satisfying
\begin{equation}
 \alpha^2-D \beta^2=4\varepsilon, \quad
 D:=-\disc(NS(X))=b^2-4ac>0.
\end{equation}
\end{enumerate}
\end{proposition}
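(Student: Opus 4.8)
The plan is to derive the equivalence from the Global Torelli theorem for K3 surfaces, using Proposition \ref{PROP_generator_of_isometry} to control $SO^+(\NS(X))$ and Lemma \ref{criterion} to convert the action on the discriminant group $A(\NS(X))$ into an integrality statement. Write $T=T(X)$ for the transcendental lattice and $Q$ for the Gram matrix of $\NS(X)$, so that $H^2(X,\Z)$ is a unimodular overlattice of $\NS(X)\oplus T$ and the gluing yields an anti-isometry $\gamma\colon A(\NS(X))\xrightarrow{\sim}A(T)$. The form of Torelli I will use is: an isometry $g$ of $\NS(X)$ together with a Hodge isometry $\psi$ of $T$ is induced by a (necessarily unique) $\varphi\in\Aut(X)$ exactly when $g$ preserves the ample cone and the pair $(g,\psi)$ glues to an isometry of $H^2(X,\Z)$, the latter meaning that $g$ and $\psi$ induce the same automorphism of the discriminant group under $\gamma$. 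Since $\varepsilon\cdot\id_T$ preserves the line $\C\omega_X$ and scales it by $\varepsilon$, it is a Hodge isometry acting as $\varepsilon\cdot\id$ on $A(T)$; hence, under $\gamma$, the gluing condition becomes precisely that $g$ acts as $\varepsilon\cdot\id$ on $A(\NS(X))$.

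For (2) $\Rightarrow$ (1): first note that $(u,v)=(\alpha^2-2\varepsilon,\alpha\beta)$ satisfies $u^2-Dv^2=4$ and $u>0$ (for $\varepsilon=1$ the relation $\alpha^2=D\beta^2+4\ge 5$ gives $u=\alpha^2-2>0$), so $g=g_{u,v}\in SO^+(\NS(X))$ by Proposition \ref{PROP_generator_of_isometry}; as $\NS(X)$ has no $(0)$-element, $D$ is a non-square and $SO^+(\NS(X))\cong\Z$, and since $v=\alpha\beta\ne 0$ we have $g\ne\id$, so $g$ has infinite order. A direct computation of $(g_{u,v}-\varepsilon I_2)\,Q^{-1}$, using $u-2\varepsilon=D\beta^2$ together with the congruence $\alpha\equiv b\beta\pmod 2$ (which follows from $\alpha^2\equiv D\beta^2\equiv b^2\beta^2\pmod 4$), shows this matrix is integral, so by Lemma \ref{criterion} $g$ acts as $\varepsilon\cdot\id$ on $A(\NS(X))$. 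Finally, because $\NS(X)$ has no $(-2)$-element it carries no $(-2)$-curve, so the ample cone equals the positive cone and $g\in O^+(\NS(X))$ preserves it; gluing $g$ with $\varepsilon\cdot\id_T$ and invoking Torelli produces $\varphi\in\Aut(X)$ with $\varphi^*|_{\NS(X)}=g$, $\varphi^*\omega_X=\varepsilon\omega_X$, and $\ord(\varphi)=\infty$.

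For (1) $\Rightarrow$ (2): the decisive input is that $\psi:=\varphi^*|_T=\varepsilon\cdot\id_T$. Indeed $T_\Q$ is an irreducible rational Hodge structure, and $\ker(\psi-\varepsilon\cdot\id)$ is a sub-Hodge structure whose complexification contains $\omega_X$, hence is nonzero, hence is all of $T_\Q$. Consequently $\psi$ has order at most $2$; since the representation of $\Aut(X)$ on $H^2(X,\Z)$ is faithful, the infinite order of $\varphi$ forces $g=\varphi^*|_{\NS(X)}$ to have infinite order. As $O^+(\NS(X))$ is infinite dihedral with $SO^+(\NS(X))\cong\Z$ (Proposition \ref{PROP_generator_of_isometry}) and every element of $O^+\setminus SO^+$ is an involution, we get $g\in SO^+(\NS(X))$ and $D$ a non-square, i.e.\ $\NS(X)$ has no $(0)$-element. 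A nontrivial element of $SO^+(\NS(X))$ has no fixed ray in the positive cone and thus cannot preserve a proper subcone bounded by rational rays; since $\varphi$ preserves the ample cone, that cone must be all of the positive cone, whence there is no $(-2)$-curve and, by Riemann--Roch, no $(-2)$-element. Writing $g=g_{u,v}$, the gluing with $\psi=\varepsilon\cdot\id_T$ forces $g$ to act as $\varepsilon\cdot\id$ on $A(\NS(X))$, so $(g_{u,v}-\varepsilon I_2)Q^{-1}$ is integral by Lemma \ref{criterion}; unwinding this with $u^2-Dv^2=4$ shows $u+2\varepsilon$ is a perfect square $\alpha^2$ with $\alpha\mid v$, and setting $\beta:=v/\alpha$ yields $(u,v)=(\alpha^2-2\varepsilon,\alpha\beta)$ with $\alpha^2-D\beta^2=4\varepsilon$ and $\alpha,\beta\ne 0$.

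The main obstacle, I expect, is twofold: establishing $\varphi^*|_T=\varepsilon\cdot\id_T$ via the irreducibility of $T_\Q$ (which also underlies the gluing bookkeeping), and the arithmetic extraction of $(\alpha,\beta)$ in (1) $\Rightarrow$ (2), namely showing that the integrality furnished by Lemma \ref{criterion} is equivalent to the norm-$\varepsilon$ element $\tfrac{\alpha+\beta\sqrt D}{2}$ squaring to $\tfrac{u+v\sqrt D}{2}$. By contrast, the cone-preservation and Torelli steps are standard, and the explicit computation carried out in (2) $\Rightarrow$ (1) serves as the template for the reverse implication.
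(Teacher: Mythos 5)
The paper itself contains no proof of this proposition: it is imported verbatim from \cite{HKL} (``In \cite{HKL}, we have the following result''), so your proposal can only be compared with the route taken in that source, which it does in fact reconstruct: Nikulin gluing of $NS(X)\oplus T(X)$ inside the unimodular lattice $H^2(X,\Z)$, the strong Torelli theorem, Lemma \ref{criterion} to convert the discriminant-group condition into integrality of $(g-\varepsilon I_2)Q_L^{-1}$, and Proposition \ref{PROP_generator_of_isometry} to parametrize $SO^+(NS(X))$ by Pell solutions. Your direction (2)$\Rightarrow$(1) is essentially complete: with $u-2\varepsilon=D\beta^2$ and the parity $\alpha\equiv b\beta \pmod 2$ (correctly deduced from $\alpha^2\equiv D\beta^2\equiv b^2\beta^2 \pmod 4$), the four entries of $(g_{u,v}-\varepsilon I_2)Q_L^{-1}$ come out as $-c\beta^2$, $\beta(b\beta-\alpha)/2$, $\beta(b\beta+\alpha)/2$, $-a\beta^2$ up to sign, all integers; the identification of the ample cone with the positive cone, the gluing with $\varepsilon\cdot\id_T$, and the Torelli step are standard, as is your argument via irreducibility of $T(X)\otimes\Q$ that $\varphi^*|_{T}=\varepsilon\cdot\id$.

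The genuine gap is in (1)$\Rightarrow$(2), at precisely the step you yourself flag as ``the main obstacle'' and then never close: the claim that integrality of $(g_{u,v}-\varepsilon I_2)Q_L^{-1}$ together with $u^2-Dv^2=4$ ``shows $u+2\varepsilon$ is a perfect square $\alpha^2$ with $\alpha\mid v$.'' This is not a formal unwinding. What Lemma \ref{criterion} actually yields is the system $D\mid a(u-2\varepsilon)$, $D\mid b(u-2\varepsilon)$, $D\mid c(u-2\varepsilon)$, together with the parity condition $b(u-2\varepsilon)/D\equiv v \pmod 2$; from this and $(u-2\varepsilon)(u+2\varepsilon)=Dv^2$ one must extract that $u+2\varepsilon$ is a perfect square whose root divides $v$ --- equivalently, that the norm-one unit $(u+v\sqrt D)/2$ of the quadratic order of discriminant $D$ is the square of a norm-$\varepsilon$ element. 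That extraction is the number-theoretic core of the cited result and requires a genuine argument (descent on Pell solutions, or unit-group structure in the order), not just the relation $u^2-Dv^2=4$. There is also a subtlety your sketch passes over: if $A(NS(X))$ were $2$-torsion, then $g$ would act as both $+\id$ and $-\id$ on it, while the extraction can hold for at most one $\varepsilon$ (two squares cannot differ by $4$ nontrivially); this case must be excluded, and it is --- for an even rank-$2$ lattice a $2$-torsion discriminant group forces $D\mid 4$, hence $D$ square, contrary to what you established --- but that, too, is an argument to be made, not assumed. As it stands, your proposal proves (2)$\Rightarrow$(1) and reduces (1)$\Rightarrow$(2) to an unproved arithmetic statement.
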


In \cite[Theorem 1]{L1}, the author showed that (1) the isometry group $O(L_m(a))\cong \Z_2\ast \Z_2$, a free product with generators 
\begin{equation}\label{A and B}
A=\begin{pmatrix} 1& 0\\ a&-1\end{pmatrix} \hspace{2.6mm} \text{ and } \hspace{2.6mm} B=\begin{pmatrix} 1& a\\ 0&-1\end{pmatrix},
\end{equation}
 and (2) $\Aut(X_{L_m(a)})\cong \Z$ for $m\geq 2$, where $L_m(a)$ is in \eqref{L_m(a)}. The generator acts on Picard lattice $NS(X_{L_m(a)})$ as $(AB)^n$ for some $n$. Moreover, the author also determined $n$ for a given $m$ as follows. 
\begin{theorem}\cite[Theorem 1.1]{L2} \label{Case Q=-1} For the K3 surface $X_{L_m(a)}$, 
\begin{enumerate}
\item if $n$ is an even number such that $m\mid a_n$, then $X$ has a symplectic automorphism $g$ such that $g^*\vert_{NS(X_{L_m(a)})}=(AB)^n$;
\item if $n$ is an odd number such that $m\mid a_n$, then $X$ has an anti-symplectic automorphism $g$ such that $g^*\vert_{NS(X_{L_m(a)})}=(AB)^n$.  
\end{enumerate}
\end{theorem}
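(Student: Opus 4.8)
The plan is to realize $(AB)^n$ as an explicit element $g_{u,v}$ of $SO^+(L_m(a))$ in the sense of Proposition \ref{PROP_generator_of_isometry}, to read off its Pell data $(u,v)$ in terms of Lucas numbers, and then to recognize $(u,v)$ as coming from the ``square-root'' shape $(\alpha^2-2\varepsilon,\alpha\beta)$ demanded by Proposition \ref{PROP_key_prop}, with the sign $\varepsilon$ forced to equal $(-1)^n$. Proposition \ref{PROP_key_prop} then manufactures the automorphism $\varphi$ with $\varphi^*|_{\NS(X_{L_m(a)})}=(AB)^n$ and simultaneously records its action on $\omega_X$, which is exactly what separates the symplectic from the anti-symplectic case.

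First I would compute the powers of $AB$ explicitly. Writing $F=\begin{pmatrix} a & 1\\ 1 & 0\end{pmatrix}$ and $J=\begin{pmatrix}0&1\\1&0\end{pmatrix}$, the recurrence \eqref{a_n} gives $F^k=\begin{pmatrix} a_{k+1} & a_k\\ a_k & a_{k-1}\end{pmatrix}$, and a direct check shows $AB=JF^2J$, whence
\begin{equation*}
(AB)^n = JF^{2n}J=\begin{pmatrix} a_{2n-1} & a_{2n}\\ a_{2n} & a_{2n+1}\end{pmatrix}.
\end{equation*}
For the lattice $L_m(a)$ one has, in the notation of \eqref{EQ_u}, $2a=2m$, $b=ma$, $2c=-2m$, so $D=-\disc(L_m(a))=m^2(a^2+4)$ and $\gcd(a,b,c)=m$. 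Matching the matrix above against the shape \eqref{EQ_u} identifies $(AB)^n=g_{u,v}$ with
\begin{equation*}
u=a_{2n+1}+a_{2n-1}=V_{2n}(a,-1),\qquad v=a_{2n}/m=U_{2n}(a,-1)/m,
\end{equation*}
where I use the standard identity $V_k=U_{k+1}+U_{k-1}$ valid for $Q=-1$; the Lucas identity $V_k(a,-1)^2-(a^2+4)U_k(a,-1)^2=4(-1)^k$ then confirms $u^2-Dv^2=4$ with $u>0$, so that $(AB)^n\in SO^+(L_m(a))$ for every $n\ge 1$.

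The crux is the number-theoretic factorization. Applying the duplication identities $V_{2n}=V_n^2-2(-1)^n$ and $U_{2n}=U_nV_n$ (for $Q=-1$) to the pair above, I can write
\begin{equation*}
(u,v)=\bigl(\alpha^2-2\varepsilon,\ \alpha\beta\bigr),\qquad \alpha=V_n(a,-1),\quad \beta=a_n/m,\quad \varepsilon=(-1)^n,
\end{equation*}
and then $\alpha^2-D\beta^2=V_n(a,-1)^2-(a^2+4)a_n^2=4(-1)^n=4\varepsilon$. Here the hypothesis $m\mid a_n$ is precisely what makes $\beta$ an integer; it is nonzero because $a_n\neq 0$ for $n\geq 1$, and $\alpha=V_n(a,-1)\neq 0$. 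This is the heart of the argument: the square-root form required by Proposition \ref{PROP_key_prop} is produced exactly by the duplication formulas, and the sign $\varepsilon=(-1)^n$ reads off the parity of $n$. I expect this matching — recognizing that the ``$\tfrac{\phantom{.}}{\phantom{.}}$-step'' Lucas data sit inside the $n$-step data as a square, with the sign governed by $n\bmod 2$ — to be the main conceptual obstacle; everything else is bookkeeping.

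Finally I would verify the remaining hypotheses of Proposition \ref{PROP_key_prop}(2) for $X=X_{L_m(a)}$. A vector $(p,q)$ has square $2m(p^2+apq-q^2)$; since $a^2+4$ is never a perfect square for $a\geq 1$, this form is anisotropic and $\NS(X_{L_m(a)})$ has no $(0)$-element, while $2m(p^2+apq-q^2)=-2$ would force $m\mid 1$, impossible for $m\geq 2$, so there is no $(-2)$-element. With these checks in hand, Proposition \ref{PROP_key_prop} applied with $\varepsilon=(-1)^n$ yields $\varphi\in\Aut(X_{L_m(a)})$ with $\ord(\varphi)=\infty$, $\varphi^*|_{\NS(X_{L_m(a)})}=(AB)^n$, and $\varphi^*\omega_X=(-1)^n\omega_X$. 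Thus $\varphi$ is symplectic when $n$ is even and anti-symplectic when $n$ is odd, which is the assertion of the theorem.
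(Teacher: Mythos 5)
Your proof is correct, but note that this paper never actually proves Theorem \ref{Case Q=-1}: the statement is imported wholesale from \cite[Theorem 1.1]{L2}, so the only in-paper argument to compare against is the one given for the parallel $b_n$-case in Section \ref{proof}. That argument runs through the discriminant group: the isometry is shown via Lemma \ref{criterion} to act as $\pm\id$ on $A(L)$, is extended to $H^2(X,\Z)$ by $\pm\id$ on the transcendental lattice using \cite[Corollary 1.5.2]{N}, and is then realized by an automorphism through the Torelli theorem, the ample cone being the whole positive cone since there are no $(-2)$-classes. You instead push everything through Proposition \ref{PROP_key_prop}: writing $(AB)^n=M_a^{2n}=\left(\begin{smallmatrix} a_{2n-1} & a_{2n}\\ a_{2n} & a_{2n+1}\end{smallmatrix}\right)$ as in Remark \ref{M_a and N_b}, matching it with $g_{u,v}$ for $(u,v)=\bigl(V_{2n}(a,-1),\,a_{2n}/m\bigr)$ (the lattice parameters of Proposition \ref{PROP_generator_of_isometry} being $(m,ma,-m)$, so $D=m^2(a^2+4)$ and $k=m$), and then using the duplication identities $V_{2n}=V_n^2-2(-1)^n$, $U_{2n}=U_nV_n$ together with $V_n^2-(a^2+4)a_n^2=4(-1)^n$ to exhibit the square-root data $(u,v)=(\alpha^2-2\varepsilon,\alpha\beta)$ with $\alpha=V_n(a,-1)$, $\beta=a_n/m$, $\varepsilon=(-1)^n$. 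I checked the matrix matching and the identities and they are all right; the hypothesis $m\mid a_n$ enters exactly where it should (to make $\beta$ a nonzero integer), and your verification that $L_m(a)$ has no $(0)$- or $(-2)$-classes (since $a^2+4$ is never a square for $a\geq 1$, and $m\geq 2$) is precisely what licenses applying Proposition \ref{PROP_key_prop}. What your route buys is that all gluing and Torelli work is absorbed into the black box of Proposition \ref{PROP_key_prop} (the main theorem of \cite{HKL}), so the proof becomes pure Lucas-sequence bookkeeping in which the parity of $n$ visibly dictates the sign $\varepsilon$; what the paper's discriminant-group route (and presumably \cite{L2}) buys is an explicit view of how the divisibility $m\mid a_n$ forces $(AB)^n$ to act as $(-1)^n\id$ on $A(L_m(a))$, which is the form of the data one needs when constructing the extension to the full K3 lattice by hand. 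Your argument also has the incidental merit of making the present paper self-contained at this point, since it derives the quoted theorem from results already stated here.
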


\begin{remark}
Moreover, by \cite[Lemma 3.1]{L2}, such an $n$ exists. 
\end{remark}


\subsection{Pell's equations} \label{section of Pell's equations}
Let $d$ be any positive integer. Pell's equations associated to $d$ are
\begin{align}
u^2-dv^2=&4 \hspace{15mm}  \text{ the positive Pell's equation};\label{positive Pell equation}\\
u^2-dv^2=&-4   \hspace{10mm}\text{ the negative Pell's equation}.\label{negative Pell equation}
\end{align}
The solutions are described in the following lemma.
\begin{lemma}\label{Pell equation} (\cite{B}, Theorem 3.18)
If $d$ is a square, the only solutions of the positive Pell's equation \eqref{positive Pell equation} are $u=\pm 2$ and $v=0$.

If $d$ is not a square, let $(U,V)$ be the smallest positive solution of \eqref{positive Pell equation} and write $\alpha=\frac{1}{2}(U+V\sqrt{d}).$ Then all solutions are generated by powers of $\alpha$ in the sense that writing $\alpha^n=\frac{1}{2}(u+v\sqrt{d})$, $(u,v)$ is a new solution and all solutions can be obtained that way.

If $(U\rq{},V\rq{})$ is a minimal positive solution of \eqref{negative Pell equation} and $\beta=\frac{1}{2}(U\rq{}+V\rq{}\sqrt{d})$, then $\beta^2=\alpha$ gives the minimal solution for \eqref{positive Pell equation} and the even powers of $\beta$ thus provide all the solutions of \eqref{positive Pell equation}, while the odd powers yield all solutions of \eqref{negative Pell equation}. 
\end{lemma}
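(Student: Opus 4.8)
The plan is to translate every statement into the multiplicative language of the real quadratic field $\Q(\sqrt d)$, so that the three assertions become facts about a discrete multiplicative subgroup of $\R_{>0}$; the only genuinely hard input will be the \emph{existence} of a nontrivial solution when $d$ is not a square, and I would isolate it accordingly. To set up, to each integer solution $(u,v)$ of $u^2-dv^2=\pm4$ associate $\gamma=\frac12(u+v\sqrt d)$, with conjugate $\bar\gamma=\frac12(u-v\sqrt d)$ and norm $\gamma\bar\gamma=\frac14(u^2-dv^2)=\pm1$. First I would verify that these solution-elements are closed under multiplication: the product $\gamma_1\gamma_2$ again has norm $\pm1$, and a short parity computation—using that $u_i^2\equiv dv_i^2\pmod 2$ forces $u_i\equiv dv_i\pmod 2$, whence $u_1u_2+dv_1v_2$ and $u_1v_2+u_2v_1$ are both even—shows $\gamma_1\gamma_2$ is again a half-integer of the required shape. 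Thus the solutions form a multiplicative group $G$, on which the norm is a homomorphism $G\to\{\pm1\}$ whose kernel $G^+$ records the positive equation \eqref{positive Pell equation}.

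For the square case $d=e^2$ the equation factors over $\Z$ as $(u-ev)(u+ev)=4$; since $u-ev$ and $u+ev$ differ by $2ev$ they have the same parity, and the only way to write $4$ as a product of two equal-parity integers is $2\cdot2$ or $(-2)\cdot(-2)$, which forces $v=0$ and $u=\pm2$. This settles the first assertion.

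Now suppose $d$ is not a square. The main obstacle, which I would treat as the single nontrivial ingredient, is producing one solution with $v\neq0$: for this I would run the classical Dirichlet argument, using that infinitely many $p/q$ satisfy $|p-q\sqrt d|<1/q$, so that $|p^2-dq^2|$ stays bounded, some value is attained infinitely often, and a pigeonhole on residues modulo that value yields a nontrivial solution of $x^2-dy^2=1$; rescaling to the $\pm4$ normalization gives a nontrivial $\gamma\in G$. Granting this, I would finish by a discreteness argument: if $\gamma>1$ is a solution then $|\bar\gamma|=1/\gamma<1$, so $u=\gamma+\bar\gamma$ and $v\sqrt d=\gamma-\bar\gamma$ are positive, forcing $u,v\geq1$ and hence $\gamma\geq\frac12(1+\sqrt d)>1$. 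Therefore $\log|\cdot|$ carries $G$ onto a subgroup of $\R$ with no accumulation at $0$, i.e. a discrete and hence infinite cyclic subgroup; its torsion kernel is $\{\pm1\}$, so $G=\{\pm1\}\times\langle\eta\rangle$ where $\eta$ is the least solution exceeding $1$. If $N(\eta)=1$ this gives $G^+=\{\pm\eta^n:n\in\Z\}$ with $\alpha=\eta$, the second assertion. If a negative solution exists then necessarily $N(\eta)=-1$ (otherwise all norms would be $+1$), so $\beta=\eta$; the norm-$+1$ solutions are then exactly the even powers, whose least element exceeding $1$ is $\alpha=\eta^2=\beta^2$, while the odd powers yield all solutions of \eqref{negative Pell equation}, which is the third assertion.
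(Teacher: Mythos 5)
Your proposal is correct, but note that the paper itself gives no proof of this lemma at all: it is quoted verbatim from Buell (\cite{B}, Theorem 3.18) and used as a black box. What you have written is the classical self-contained argument that such references contain: identify a solution $(u,v)$ of $u^2-dv^2=\pm 4$ with the unit $\gamma=\tfrac12(u+v\sqrt d)$ of norm $\pm1$ in the real quadratic order of discriminant $4d$, check closure under multiplication by the parity computation (which is right: $u_i\equiv dv_i \pmod 2$ does force both coordinates of the product to be even), dispose of the square case by factoring $(u-ev)(u+ev)=4$ into equal-parity factors, get one nontrivial unit from Dirichlet's pigeonhole, and then use the gap $\gamma\geq\tfrac12(1+\sqrt d)$ for solution-elements $\gamma>1$ to conclude that $G\cong\{\pm1\}\times\langle\eta\rangle$, after which the trichotomy on the norm of the fundamental element $\eta$ yields all three assertions, including the identification $\alpha=\beta^2$ when the negative equation is solvable. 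Two points deserve a line more care if this were to be written out in full: in the Dirichlet step you must arrange that the two approximants chosen by the pigeonhole have distinct ratios $p_1/q_1\neq p_2/q_2$, so that the resulting solution of $x^2-dy^2=1$ has $y\neq 0$; and you should say explicitly that for solutions with $u,v>0$ the orderings by $u$, by $v$, and by $\gamma$ agree (immediate from $u^2=dv^2\pm4$), so that the lemma's ``smallest positive solution'' really is your least element of $G$ exceeding $1$. With those routine additions your argument is complete, and it buys the paper something it currently lacks, namely independence from the external reference.
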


\subsection{Generalized Fibonacci sequences}
The Fibonacci sequence is $\{f_n\}_{n\geq 0}$ satisfying $f_{n+2}=f_{n+1}+f_{n}$ with the initial terms $f_0=0$ and $f_1=1$. In \cite{S}, Silvester showed many properties of the sequence by matrix representation.  For  $M_{(1,1)}=\begin{pmatrix} 0& 1\\ 1&1\end{pmatrix}$,
\begin{equation}\label{Fibonacci by M}
M_{(1,1)}^n\begin{pmatrix} 0\\1\end{pmatrix}=\begin{pmatrix} f_n\\ f_{n+1}\end{pmatrix}.
\end{equation}
By diagonalizing $M$, he also showed that
\begin{equation}
f_n=\frac{r_1^n}{P\rq{}(r_1)}+\frac{r_2^n}{P\rq{}(r_2)},
\end{equation}
where $r_i$ is the eigenvalue of the companion matrix $M$ for the polynomial $P(t)=t^2-t-1$. ($P\rq{}$ is the derivative of $P$.)
This is a special case of a general Fibonacci sequence defined as follows. Given values for the first $2$ terms, $a_0, a_1$, 
\begin{equation}\label{sequence}
a_{n+2}=c_0a_n+c_1a_{n+1},
\end{equation}
where $c_0, c_1$ are constants. Then we get a generalization of \eqref{Fibonacci by M}: for 
\begin{equation}\label{general M}
M_{(c_0,c_1)}=
   \begin{pmatrix}
   0      & 1    \\
   c_0    & c_1    
  \end{pmatrix}, ~\
M_{(c_0,c_1)}^n
   \begin{pmatrix}
   a_0  \\a_1
  \end{pmatrix}= \begin{pmatrix}
   a_n  \\a_{n+1} 
  \end{pmatrix}.
\end{equation}
For the initial terms $a_0=0, a_1=1$ with $c_0=c_1=1$, we get the original Fibonacci sequence $\{f_n\}$. This is the same as in the Lucas sequence $\{U_n(1,-1)\}$. For the initial terms $a_0=0, a_1=1$ with $c_0=1, c_1=a$, we get the Lucas sequence $\{U_n(a,-1)\}$. Similarly, for the initial terms $b_0=0, b_1=1$ with $c_0=-1, c_1=b$, we get the Lucas sequence $\{U_n(b,1)\}$.

For the sequence $\{U_n(a,-1)\}$, we have the following properties:
\begin{lemma}\cite[Lemma 9]{L1}\cite[Lamma 2.1]{L2}\label{properties of Fibonacci seq}
For the sequence in \eqref{a_n}, 
\begin{enumerate}
\item $a_{n+k}=a_ka_{n+1}+a_{k-1}a_n=a_{k+1}a_n+a_ka_{n-1}$;
\item $a_{n+1}a_{n-1}-a_n^2=\left\{ \begin{array}{ll}
      1 & \mbox{if $n$ is even},\\
      -1 & \mbox{if $n$ is odd};\end{array} \right.$ 
\item $a_{2n+1}+a_{2n-1}=(a^2+4)a_n^2+(-1)^n2$.
\end{enumerate}
\end{lemma}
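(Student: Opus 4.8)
The plan is to base everything on the companion matrix of the recurrence, exactly in the spirit of \eqref{general M}. I set $M=M_{(1,a)}=\begin{pmatrix} 0 & 1 \\ 1 & a \end{pmatrix}$, the matrix attached to $a_{n+2}=a_n+a\cdot a_{n+1}$. First I would prove by induction on $n\geq 1$ that $M^n=\begin{pmatrix} a_{n-1} & a_n \\ a_n & a_{n+1}\end{pmatrix}$: the base case $n=1$ is the definition of $M$ together with $a_0=0$, $a_1=1$, $a_2=a$, and the inductive step $M^{n+1}=M\cdot M^n$ reduces to the two instances $a_{n+1}=a\cdot a_n+a_{n-1}$ and $a_{n+2}=a\cdot a_{n+1}+a_n$ of the recurrence. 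Once this closed form is in hand, all three identities fall out of elementary matrix algebra.

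For part (1), I would expand $M^{n+k}=M^n M^k$ and read off the two off-diagonal entries. Since $M^{n+k}$ is symmetric, its $(2,1)$- and $(1,2)$-entries both equal $a_{n+k}$; computing the product $M^n M^k$ entrywise, the $(2,1)$-entry yields $a_{n+k}=a_{n+1}a_k+a_n a_{k-1}$ and the $(1,2)$-entry yields $a_{n+k}=a_n a_{k+1}+a_{n-1}a_k$, which are precisely the two stated forms. For part (2), I would simply take determinants: since $\det M=-1$, we get $\det(M^n)=(-1)^n$, while the closed form gives $\det(M^n)=a_{n-1}a_{n+1}-a_n^2$; hence $a_{n+1}a_{n-1}-a_n^2=(-1)^n$, which is exactly the asserted case split. (I would note in passing that (2) also drops out of the Binet relation $V_n^2-DU_n^2=4Q^n$ recorded in the excerpt, specialized to $Q=-1$, $D=a^2+4$.)

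Part (3) is then a corollary of (1) and (2), and this is where the only genuine care is needed. Applying the addition formula (1) with a deliberate choice of index split, I would write $a_{2n+1}=a_{n+(n+1)}=a_{n+1}^2+a_n^2$ and $a_{2n-1}=a_{(n-1)+n}=a_n^2+a_{n-1}^2$; the point is to decompose $2n-1$ as $(n-1)+n$ rather than $n+(n-1)$, so that (1) returns a clean sum of two squares instead of an expression involving $a_{n-2}$. Adding these gives $a_{2n+1}+a_{2n-1}=a_{n+1}^2+2a_n^2+a_{n-1}^2$. Finally I would eliminate the outer squares via the recurrence in the form $a_{n+1}-a_{n-1}=a\cdot a_n$, writing $a_{n+1}^2+a_{n-1}^2=(a_{n+1}-a_{n-1})^2+2a_{n+1}a_{n-1}=a^2a_n^2+2a_{n+1}a_{n-1}$, and then substitute $a_{n+1}a_{n-1}=a_n^2+(-1)^n$ from part (2). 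Collecting terms produces $a_{2n+1}+a_{2n-1}=(a^2+4)a_n^2+(-1)^n2$, as required. The main (indeed the only) obstacle is the index bookkeeping in (3): the substitutions into (1) and the use of the Cassini-type identity (2) must be arranged so that every cross term cancels, which is exactly what the decomposition above secures.
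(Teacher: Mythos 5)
Your proof is correct and takes essentially the same approach as the paper: the lemma itself is only cited there from \cite{L1,L2}, but the paper's proof of the parallel Lemma \ref{properties of b_n} for $\{b_n\}$ uses exactly your strategy---companion-matrix powers for the addition formula, the determinant $\pm1$ for the Cassini-type identity, and a combination of parts (1), (2) and the recurrence for part (3). The closed form $M^n=\begin{pmatrix} a_{n-1}&a_n\\ a_n&a_{n+1}\end{pmatrix}$ that you establish by induction is precisely the paper's Remark \ref{M_a and N_b}.
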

Similarly, for the sequence $\{U_n(b,1)\}$, we have similar properties:
\begin{lemma} \label{properties of b_n} 
For the sequence $\{b_n\}=U_n(b,1)$, 
\begin{enumerate}
\item $b_{n+k}=b_kb_{n+1}-b_{k-1}b_n=b_{k+1}b_n-b_kb_{n-1}$;
\item $b_{n-1}b_{n+1}-b_n^2=1$;
\item $b_{2n+1}-b_{2n-1}=(b^2-4)b_n^2+2$.
\end{enumerate}
\end{lemma}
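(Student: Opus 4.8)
The plan is to prove all three identities by the companion-matrix method of Silvester (see \eqref{general M}) together with the Binet formulas, paralleling the proof of Lemma~\ref{properties of Fibonacci seq}; the only structural differences are that the recurrence \eqref{b_k} carries a minus sign and that the relevant parameter is $Q=1$ rather than $Q=-1$. First I would set $M:=M_{(-1,b)}=\begin{pmatrix}0&1\\-1&b\end{pmatrix}$, the companion matrix of $b_{n+1}=b\,b_n-b_{n-1}$, and prove by a one-line induction that
\begin{equation*}
M^n=\begin{pmatrix}-b_{n-1}&b_n\\-b_n&b_{n+1}\end{pmatrix},\qquad n\geq 1,
\end{equation*}
the base case being \eqref{b_k} and the inductive step a single application of the recurrence.

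For part~(1) I would read off the $(1,2)$-entries of the two factorizations $M^{n+k}=M^kM^n$ and $M^{n+k}=M^nM^k$: multiplying the matrices above gives $b_{n+k}=b_kb_{n+1}-b_{k-1}b_n$ from the first and $b_{n+k}=b_{k+1}b_n-b_kb_{n-1}$ from the second, which are exactly the two asserted addition formulas. For part~(2) I would take determinants: since $\det M=1$ we have $\det M^n=(\det M)^n=1$ for every $n$, and expanding $\det M^n$ from the displayed form produces the Cassini-type identity~(2). Note that, because $Q=1$, this constant is independent of $n$, in contrast to the alternating $(-1)^n$ occurring for $\{a_n\}$ in Lemma~\ref{properties of Fibonacci seq}, where $Q=-1$; pinning down the sign via the determinant is the one point that requires care.

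For part~(3) I would avoid squaring directly and instead pass to the companion Lucas sequence $V_n=V_n(b,1)=\alpha^n+\beta^n$. Using the Binet formulas and $\alpha\beta=Q=1$, one checks the telescoping relation $b_{m+1}-b_{m-1}=V_m$; taking $m=2n$ gives $b_{2n+1}-b_{2n-1}=V_{2n}$. Then $V_{2n}=(\alpha^n+\beta^n)^2-2(\alpha\beta)^n=V_n^2-2$, while the general identity $V_n^2-DU_n^2=4Q^n$ with $D=b^2-4$ and $Q=1$ gives $V_n^2=(b^2-4)b_n^2+4$; combining the two yields $b_{2n+1}-b_{2n-1}=(b^2-4)b_n^2+2$, as desired. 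Equivalently, part~(3) follows formally from~(1) and~(2): the addition formula gives $b_{2n+1}=b_{n+1}^2-b_n^2$ and $b_{2n-1}=b_n^2-b_{n-1}^2$, and substituting $b_{n+1}+b_{n-1}=b\,b_n$ together with~(2) collapses $b_{n+1}^2+b_{n-1}^2-2b_n^2$ to the stated value. I expect no genuine obstacle here: all three parts are routine once $M^n$ is in hand, and the only delicate point throughout is the sign bookkeeping, precisely where the $\{b_n\}$ case diverges from the $\{a_n\}$ case.
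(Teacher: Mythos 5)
Your proposal is correct in substance and, for parts (1) and (2), is essentially the paper's own proof: the paper likewise works with the companion matrix $N_{(-1,b)}=\begin{pmatrix}0&1\\-1&b\end{pmatrix}$, extracts the two addition formulas by splitting $N^{n+k-1}$ as $N^nN^{k-1}$ and as $N^kN^{n-1}$, and derives (2) from $\det N=1$. For part (3) your primary argument is genuinely different: you pass to the companion Lucas sequence $V_n=V_n(b,1)$ and use the Binet formulas, via $b_{2n+1}-b_{2n-1}=V_{2n}=V_n^2-2$ and $V_n^2-(b^2-4)b_n^2=4$, whereas the paper performs the elementary collapse of $b_{n+1}^2-2b_n^2+b_{n-1}^2$ using the recurrence and (2) --- which is exactly your stated alternative route. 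The Binet route uses slightly more machinery but buys independence from identity (2), and that independence turns out to matter.

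The one real defect, which you brushed against (``pinning down the sign \ldots\ requires care'') but did not resolve: identity (2) as stated is false, and your claim that expanding $\det M^n$ ``produces the Cassini-type identity (2)'' is therefore not right as written. The determinant of $\begin{pmatrix}-b_{n-1}&b_n\\-b_n&b_{n+1}\end{pmatrix}$ equals $b_n^2-b_{n-1}b_{n+1}$, so $\det M^n=1$ yields $b_n^2-b_{n-1}b_{n+1}=1$, i.e.\ $b_{n-1}b_{n+1}-b_n^2=-1$; check $n=1$, $b=4$: $b_0b_2-b_1^2=-1$. This sign error is present in the paper's statement and glossed over in its proof as well, so you are in good company, but note the downstream effect: your alternative derivation of (3) from (1) and (2), like the paper's own proof of (3), silently uses the corrected version $b_{n-1}b_{n+1}=b_n^2-1$; if one used (2) as literally stated, the same computation would give $(b^2-4)b_n^2-2$, contradicting (3) and the numerics ($b=4$, $n=1$: $b_3-b_1=14=(b^2-4)\cdot 1+2$). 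Your Binet argument for (3) is immune to this and is the cleanest item on the table; if you keep the determinant proof of (2), the identity should be stated as $b_n^2-b_{n-1}b_{n+1}=1$.
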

\begin{proof} The sequence $\{U_n(b,1)\}$ has a matrix representation: 
for 
\begin{equation}\label{N}
N:=N_{(-1,b)}=\begin{pmatrix} 0& 1\\ -1&b\end{pmatrix},
~\ N^n\begin{pmatrix} b_0\\b_1\end{pmatrix}=\begin{pmatrix} b_n\\ b_{n+1} \end{pmatrix}.
\end{equation}
Then $b_{n+k}=\begin{pmatrix} 0& 1\end{pmatrix}N^{n+k-1}\begin{pmatrix} 0& 1\end{pmatrix}^{\transpose}=\begin{pmatrix} 0& 1\end{pmatrix}N^nN^{k-1}\begin{pmatrix} 0& 1\end{pmatrix}^{\transpose}=b_kb_{n+1}-b_{k-1}b_n$. Similarly, $b_{n+k}=\begin{pmatrix} 0& 1\end{pmatrix}N^{n+k-1}\begin{pmatrix} 0& 1\end{pmatrix}^{\transpose}=\begin{pmatrix} 0& 1\end{pmatrix}N^{k}N^{n-1}\begin{pmatrix} 0& 1\end{pmatrix}^{\transpose}=b_{k+1}b_{n}-b_{k}b_{n-1}$.
The second follows from the determinant of $N$=1. For the last, $b_{2n+1}-b_{2n-1}=b_{n+1}^2-b_n^2-(b_n^2-b_{n-1}^2)$ by condition 1, and which is $-2b_n^2+(bb_n-b_{n-1})^2+b_{n-1}^2=(b^2-2)b_n^2-2b_{n-1}b_{n+1}=(b^2-4)b_n^2+2$ by the condition 2.
\end{proof}

\begin{remark}\label{M_a and N_b}
For $M_a:=\begin{pmatrix} 0&1\\1  & a\end{pmatrix}$ and $N_b:=\begin{pmatrix} 0&1\\-1  & b\end{pmatrix}$,
\begin{equation}\label{power of M and N}
M_a^{n}=\begin{pmatrix} a_{n-1}&a_n\\a_n  & a_{n+1}\end{pmatrix} ~\ \text{ and }~\ N_b^{n}=\begin{pmatrix} -b_{n-1}&b_n\\ -b_n  & b_{n+1}\end{pmatrix} (n\geq 1).
\end{equation}

\end{remark}

\section{Proof of Theorem \ref{correspondence of three items}}\label{proof}
\subsection{Case $a_n$.} Let $a_n$ ($n\geq 2$) be the $n$-th generalized Fibonacci number, i.e., $a_n=U_n(a,-1)$ for some $a$. Then we consider the lattice $L_m(a)$ as in \eqref{L_m(a)}, where $m\mid a_n$ and $m\in \Z_{\geq 2}$. Then by Theorem \ref{Case Q=-1}, we have a pair of K3 surface and its automorphism $(X_{L_m(a)}, g)$ with $m\mid a_n$ and $g^*\vert_{NS(X_{L_m(a)})}=(AB)^n$. Since $AB=M_a^2$ with $M_a$ in Remark \ref{M_a and N_b}, this pair satisfies condition 3 in Theorem \ref{correspondence of three items} by \eqref{power of M and N} and Lemma \ref{properties of Fibonacci seq}.

Next, for the pair $(X_{L_m(a)}, g)$ in condition 3,  $g^*\vert_{NS(X_{L_m(a)})}=(AB)^n$ for some $n$ by Theorem \ref{Case Q=-1} and the fact that $\Aut(X_{L_m(a)})\cong \Z$ for $m\geq 2$ and the generator acts on Picard lattice $L_m(a)$ as $(AB)^n$ for some $n$ (see Theorem 1 in \cite{L1}). By Proposition \ref{PROP_key_prop}, we have $\trace(AB)^n=\alpha^2-2\epsilon$ for some $\alpha$. Again by Theorem \ref{Case Q=-1}, $\epsilon=(-1)^n$. Note that $(AB)^n=M_a^{2n}$, where $M_a$ is in Remark \ref{M_a and N_b}. Hence $\trace(AB)^n=a_{2n-1}+a_{2n+1}$ and this is $(a^2+4)a_n^2+(-1)^n2$ according to Lemma \ref{properties of Fibonacci seq}. Hence, we have a solution of the $y$-coordinate of $x^2-(a^2+4)y^2=\pm 4$.

Finally, for a solution of the $y$-coordinate of $x^2-(a^2+4)y^2=\pm 4$, it is a generalized Fibonacci number by Theorem \ref{theorem1}. \qed

\subsection{Case $b_n$.}
Before we prove the theorem for $b_n$, we consider the following. As in \eqref{general M}, $U_n(b,1)$ can be represented by matrices:
\begin{equation}\label{generalized Fibonacci by matrix with Q=1}
N_{b}^n=\begin{pmatrix} -b_{n-1}& b_n\\ -b_n& b_{n+1}\end{pmatrix},
\end{equation}
where $N_b$ is in Remark \ref{M_a and N_b}.

In \cite{GLP}, Galluzzi, Lombardo, and Peters gave the following example. 
\begin{example}\cite[Example 4 in \S5]{GLP}\label{example of GLP}
Let $X_{L(b)}$ be a K3 surface whose Picard lattice $L(b)$ is in \eqref{L(b)}. If $b\geq 4$, then $\Aut(X_{L(b)})\cong \Z\rtimes \Z_2$ with generators $u$ and $v$ whose actions on $L(b)$ are given by 
\begin{equation}\label{C^2}
C^2=\begin{pmatrix} 0& -1\\ 1&b\end{pmatrix}^2=(N_b^{\transpose})^2 ~\ \text{ and } ~\  D=\begin{pmatrix} 1& b\\ 0&-1\end{pmatrix}. 
\end{equation}
Moreover, $u$ is symplectic while $v$ is anti-symplectic.
Note that if $b=3$, then $L(3)$ contains a $(-2)$ element, so $X_{L(3)}$ has a finite automorphism group by the following remark.
\end{example}
\begin{remark} It is known that for a K3 surface $X$ of Picard number $2$, $\Aut(X)$ is finite if and only if there exists a nontrivial element in the Picard lattice whose self-intersection number is $0$ or $-2$ (see \cite[Corollary 1]{GLP}).
\end{remark}
\vspace{3mm}
Now, we prove the theorem for $b_n$, i.e., $U_n(b,1)$. Let $b_n$ be the $n$-th generalized Fibonacci number, i.e., $b_n=U_n(b,1)$ for some $b$. By Lemma \ref{properties of b_n}, $(b^2-4)b_n^2+2=b_{2n+1}-b_{2n-1}$ and the latter is $\trace (C)^{2n}$. By Lemma \ref{criterion}, this is an isometry of $L(b)$ in \eqref{L(b)} acting as $\id$ on the discriminant group $A(L(b)):=L(b)^*/L(b)$. Hence, the isometry $C^{2n}$ of $L(b)$ extends to an isometry of $H^2(X_{L(b)},\Z)$ by defining $\id$ on $T_X$, the transcendental lattice of $X$ by \cite[Corollary 1.5.2]{N}. This extension extends to a symplectic automorphism of $X_{L(b)}$ by the Torelli theorem. In this case, the ample cone of $X_{L(b)}$ is just the positive cone because the Picard lattice $L(b)$ has no $(-2)$ elements. Hence, $C^{2n}$ is the corresponding isometry of a symplectic automorphism $g$ of $X_{L(b)}$ on $L(b)$, i.e., $g^*\vert_{L(b)}=C^{2n}$.  Hence, $b_n$ corresponds to the pair $(X_{L(b)}, g)$ with condition 3 in Theorem \ref{correspondence of three items}. 

Next, for a pair $(X_{L(b)}, g)$ with condition 3, 
$(b^2-4)b_n^2+2=\alpha^2-2$ for some $\alpha\in \Z_{\geq 4}$ by \cite[Main Theorem]{HKL}. Hence, the pair $(X_{L(b)}, g)$ with condition 3 corresponds to a solution of the $y$-coordinate of $x^2-(b^2-4)y^2=4$. 

Next, for a $y$-coordinate solution $n$ of $x^2-(b^2-4)y^2=4$, $(b^2-4)n^2+4=\alpha^2$ with $\alpha>3$. Then by \cite[Main Theorem]{HKL}, $\alpha^2-2=(b^2-4)n^2+2$ is a trace of some automorphism $g$ of a projective K3 surface $X$ with Picard number $2$ such that $\ord(g)=\infty$ and $g^*\omega_X= \omega_X$. Then, by Proposition \ref{PROP_key_prop}, if we let $\beta=n$, then we have a K3 surface $X_{L(b)}$ with a symplectic automorphism $g$ such that $g^*\vert_{L(b)}$ is given as in Proposition \ref{PROP_key_prop}. By Proposition \ref{PROP_generator_of_isometry}, any such form is given by any solution of $u^2-(b^2-4)v^2=4$. In particular, $(u,v)=(b,1)$ gives $C^2$ in Example \ref{example of GLP}. Hence, it is generated by $C^2$. So $g^*\vert_{L(b)}=C^{2l}$ for some $l$, and hence $\alpha^2-2=\trace C^{2l}=b_{2l+1}-b_{2l-1}=(b^2-4)b_l^2+2$. So $n=b_l$ for some $l$. \qed

\section{Intersection of Lucas sequences}\label{systems of Pell's equations}

For a system of Pell's equations \eqref{system 1},
the existence of finite solutions is known as follows.
\begin{theorem}\cite[Theorem 6]{A}\label{system of Diophantine equations}
A system of Diophantine equations
\begin{equation}
  \left\{\begin{array}{@{}l@{}}
    a_1x^2+b_1y^2+c_1z^2=d_1\\
    a_2x^2+b_2y^2+c_2z^2=d_2,
  \end{array}\right.\,
\end{equation}
where $a_i, b_i, c_i, d_i (i=1, 2)$ are integers and in the matrix of coefficients
\begin{equation}
\begin{pmatrix} a_1& b_1&c_1&d_1\\ a_2&b_2&c_2&d_2\end{pmatrix} 
\end{equation}
every $2\times 2$ minor is non-zero, has a finite number of solutions.
\end{theorem}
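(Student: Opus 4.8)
The plan is to reinterpret the integer solutions as integral points on an algebraic curve and then invoke Siegel's theorem on integral points. First I would homogenize the system with a new variable $w$, replacing the two affine equations by the projective quadrics
\[
Q_1\colon a_1x^2+b_1y^2+c_1z^2-d_1w^2=0,\qquad Q_2\colon a_2x^2+b_2y^2+c_2z^2-d_2w^2=0
\]
in $\BP^3$ with coordinates $[x:y:z:w]$. Since the two rows of the coefficient matrix are not proportional (a single nonvanishing minor already forces this), $Q_1$ and $Q_2$ are distinct members of a genuine pencil, so their common zero locus $C=Q_1\cap Q_2$ is a curve, namely a codimension-two complete intersection. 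Every integer solution $(x,y,z)$ yields the point $[x:y:z:1]\in C$, distinct solutions give distinct points, and these are exactly the integral points of the affine curve $C\cap\{w=1\}=C\setminus(C\cap\{w=0\})$. Thus it suffices to bound the integral points on this affine curve.

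The role of the minor hypothesis is precisely to guarantee that $C$ is smooth. Writing the diagonal Gram matrices $Q_1=\diag(a_1,b_1,c_1,-d_1)$ and $Q_2=\diag(a_2,b_2,c_2,-d_2)$, the discriminant of the pencil is the binary quartic
\[
\det(sQ_1+tQ_2)=(sa_1+ta_2)(sb_1+tb_2)(sc_1+tc_2)(sd_1+td_2),
\]
whose four linear factors have pairwise distinct roots in $\BP^1$ exactly when all six $2\times2$ minors of the coefficient matrix are nonzero, which is our hypothesis. By the classical smoothness criterion for intersections of two quadrics, distinct roots of this discriminant are equivalent to $C$ being smooth. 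A smooth complete intersection is connected, hence irreducible, and adjunction gives $K_C=(K_{\BP^3}+Q_1+Q_2)|_C=\sO_C$, so $C$ is an irreducible smooth curve of genus $1$.

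Since $C$ has genus $1\ge 1$ and we remove the nonempty set $C\cap\{w=0\}$ of points at infinity (two distinct conics in the plane $w=0$ meet over $\overline{\Q}$ by B\'ezout), Siegel's theorem on integral points guarantees that $C\setminus\{w=0\}$ has only finitely many integral points. In particular the integer solutions of the original system, which inject into this set, are finite in number, as claimed.

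The essential external input is Siegel's theorem, and the place where the hypothesis does real work is the smoothness step: if two of the minors vanished, the quartic discriminant would acquire a repeated root and $C$ could degenerate so as to contain a line or a conic, and such a rational component typically carries a Pell-type family with infinitely many integral points. So the nonvanishing of all minors is exactly what forbids a genus-$0$ component and lets Siegel apply. The main obstacle I anticipate is therefore the careful verification that distinct roots of $\det(sQ_1+tQ_2)$ force $C$ to be smooth (and not merely reduced), i.e. the classical theory of pencils of quadrics. One could instead eliminate $x^2$ and reduce to two simultaneous binary equations of the shape $\delta x^2+\nu_X z^2=\mu_X$ and $\delta y^2+\nu_Y z^2=\mu_Y$ and argue by growth rates of Pell solutions; but that route must separately treat the case where both equations are hyperbolas and then needs a transcendence- or subspace-type coincidence bound for two exponentially growing sequences, so the curve-theoretic argument via Siegel is the cleaner path.
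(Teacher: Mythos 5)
Your proposal cannot be compared against an internal argument, because the paper does not prove this statement at all: it is quoted verbatim from Alekseyev \cite[Theorem 6]{A} and used purely as a black box (to conclude finiteness for the mixed-sign system \eqref{the fourth system}), so the only ``proof'' in the paper is the citation. Judged on its own merits, your Siegel-based argument is correct in outline and is the natural geometric route: solutions inject into the integral points of the affine part of the space curve $C=Q_1\cap Q_2\subset \BP^3$, the minor hypothesis makes $C$ a smooth irreducible quartic of genus $1$, and Siegel's theorem gives (ineffective) finiteness, which is all that is claimed. Two steps need more care than you give them. First, your early assertion that distinct members of the pencil automatically meet in a curve is not quite immediate: two distinct quadrics can share a plane. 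This is ruled out here because each $Q_i=\diag(a_i,b_i,c_i,-d_i)$ has rank at least $3$ --- if two diagonal entries of $Q_i$ vanished, the corresponding $2\times 2$ minor of the coefficient matrix would vanish --- so each $Q_i$ is irreducible and the intersection is an honest codimension-$2$ complete intersection. Second, the equivalence ``simple roots of $\det(sQ_1+tQ_2)$ iff $C$ smooth'' is genuinely classical but should be cited or proved (it is in Reid's thesis on intersections of two quadrics); in characteristic $0$ the direction you need follows because a singular point $p$ of $C$ is a singular point of some member of the pencil, simple roots force every singular member to be a rank-$3$ cone, and a vertex of such a cone lying on the base locus creates a double root of the discriminant, a contradiction. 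Your identification of the six minors with pairwise distinctness of the four roots $[a_2:-a_1],[b_2:-b_1],[c_2:-c_1],[d_2:-d_1]$ is exactly right and is the cleanest way to see where the hypothesis enters; the trade-off versus the elimination-style argument in the cited source (reducing to integral points on a quartic model) is that you must invoke the pencil-of-quadrics smoothness criterion, but in exchange the genus-$1$ structure and the role of each hypothesis become transparent. With the two patches above, your proof is complete.
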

Using Theorem \ref{correspondence of three items}, we solve several systems of Pell's equations.

\subsection{Intersection of two $V$-sequences}
Consider a system of Pell's equations
\begin{equation}\label{first type of system}
  \left\{\begin{array}{@{}l@{}}
    x^2-D_1y^2=\pm 4\\
    x^2-D_2z^2=\pm 4
  \end{array}\right.\,,
\end{equation}
where $D_i=P_i^2-4Q_i$ ($i=1,2$), the discriminant of  $\{V_n(P_i,Q_i)\}$. The set of solutions of $x$-coordinate of \eqref{first type of system} is the intersection of $\{V_n(P_1,Q_1)\}$ and $\{V_n(P_2,Q_2)\}$.
\begin{theorem}\label{intersection of V_n}
For the system of Diophantine equations with the same signs of $4$'s
\begin{equation}\label{the first system}
  \left\{\begin{array}{@{}l@{}}
    x^2-(P_1^2+4)y^2=\pm 4\\
    x^2-(P_2^2+4)z^2=\pm 4
  \end{array}\right.\,, (P_1\neq P_2),
\end{equation}
\begin{enumerate}
\item if $(P_1^2+4)(P_2^2+4)$ is not square, then the only solution is $(x,y,z)=(2,0,0)$;
\item if $(P_1^2+4)(P_2^2+4)$ is a square,  then for any solution $(x,y,z)$, there are both even or odd $m,n\in \Z_{>0}$ such that $x^2+(-1)^m 2=\trace(M_1^{2m})=\trace(M_2^{2n})$, where 
\begin{equation*}
M_1=\begin{pmatrix} 0& 1\\ 1&P_1\end{pmatrix} \text{ and } \hspace{0.2mm} M_2=\begin{pmatrix} 0& 1\\ 1&P_2\end{pmatrix}.
\end{equation*} 

Moreover, if $m$ and $n$ are the minimal such numbers, then the solution set of $x$-coordinate forms the Lucas sequence $\{V_n(\lambda^m+(-\lambda)^{-m}, (-1)^{m})\}$, where $\lambda$ and $-1/\lambda$ are the eigenvalues of $M_1$.
\end{enumerate}
\end{theorem}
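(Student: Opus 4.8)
The plan is to translate the entire system into statements about the two sequences $\{V_j(P_1,-1)\}$ and $\{V_j(P_2,-1)\}$ and then study their intersection through the unit group of a real quadratic field. Since $Q_i=-1$ and $D_i=P_i^2+4$, the integer solutions of $x^2-(P_i^2+4)y^2=4\varepsilon$ are exactly the pairs $(\pm V_j(P_i,-1),\pm U_j(P_i,-1))$ with $(-1)^j=\varepsilon$. Hence a solution $(x,y,z)$ of \eqref{the first system} with common sign $\varepsilon$ forces $x=V_m(P_1,-1)=V_n(P_2,-1)$ with $(-1)^m=\varepsilon=(-1)^n$; in particular $m\equiv n\pmod 2$, which is the asserted ``both even or both odd''. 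I would then record the doubling identity $V_{2m}(P,-1)=V_m(P,-1)^2-2(-1)^m$, which rewrites $x^2$ as $\trace(M_i^{2m})+2(-1)^m$. Because both equations carry the same sign $\varepsilon=(-1)^m=(-1)^n$, the additive constants agree and we obtain $\trace(M_1^{2m})=\trace(M_2^{2n})$, which is the content of the displayed trace identity in the statement.

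For part (1) I would argue by rationality. If a solution has $yz\neq 0$, then since both equations equal $x^2-4\varepsilon$ we get $(P_1^2+4)y^2=(P_2^2+4)z^2$, whence $(P_1^2+4)(P_2^2+4)=\big((P_2^2+4)z/y\big)^2$ is the square of a rational and, being an integer, a perfect square. Contrapositively, if $(P_1^2+4)(P_2^2+4)$ is not a square, every solution has $y=0$ or $z=0$; a short check using the common sign $\varepsilon$ forces $\varepsilon=1$ and $(x,y,z)=(2,0,0)$. I would remark that Theorem \ref{system of Diophantine equations} does not apply here, because the $x^2$-- and constant--columns of the coefficient matrix give a vanishing $2\times 2$ minor; this is precisely why the square case can admit infinitely many solutions.

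For part (2) the heart is to convert the trace equality into a multiplicative equation among units. Set $\lambda=\tfrac12(P_1+\sqrt{P_1^2+4})$ and $\nu=\tfrac12(P_2+\sqrt{P_2^2+4})$, the larger eigenvalues of $M_1,M_2$; both exceed $1$ and are units in their respective orders. Because $(P_1^2+4)(P_2^2+4)$ is a perfect square, $\sqrt{P_2^2+4}\in\Q(\sqrt{P_1^2+4})$, so $\lambda$ and $\nu$ lie in one and the same real quadratic field. Using $\trace(M_i^{2k})=\Lambda^{k}+\Lambda^{-k}$ (with $\Lambda=\lambda^2$ or $\nu^2$) together with the strict monotonicity of $t\mapsto t+t^{-1}$ on $(1,\infty)$, the equality $\trace(M_1^{2m})=\trace(M_2^{2n})$ is equivalent to $\lambda^{2m}=\nu^{2n}$. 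Since the unit group of $\Q(\sqrt{P_1^2+4})$ modulo torsion is infinite cyclic, $\lambda^2$ and $\nu^2$ are automatically multiplicatively dependent, so the positive solutions $(m,n)$ of $\lambda^{2m}=\nu^{2n}$ are exactly the positive multiples $k(m_0,n_0)$ of a single minimal pair. Finally the subsequence identity $V_{km_0}(P_1,-1)=V_k\big(V_{m_0}(P_1,-1),(-1)^{m_0}\big)$, valid because the $m_0$-step subsequence has characteristic roots $\lambda^{m_0},(-\lambda)^{-m_0}$, identifies the set of common $x$-values with $\{V_k(\lambda^{m_0}+(-\lambda)^{-m_0},(-1)^{m_0})\}$, as claimed.

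I expect the main obstacle to be the passage, inside the square case, from the coincidence $V_m(P_1,-1)=V_n(P_2,-1)$ to the clean exponential equation $\lambda^{2m}=\nu^{2n}$ and the proof that its solution set is a single cyclic semigroup; this is exactly where the ``square'' hypothesis is genuinely used, through the rank-one structure of the unit group. A secondary but delicate point is the parity bookkeeping: one must verify that the minimal pair $(m_0,n_0)$ carries the correct residue $\varepsilon=(-1)^{m_0}$, so that the Lucas parameter $Q=(-1)^{m_0}$ in the conclusion matches the fixed sign of the system. The matching is cleanest when $m_0$ is even (the $+4$ case, where $V_{2k}(P,-1)=V_k(V_2(P,-1),1)$ makes the intersection literally a $V$-sequence), and I would handle the odd case by the same subsequence identity while tracking which terms actually realize the prescribed sign.
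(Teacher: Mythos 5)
Your proposal is correct in outline and reaches the theorem by a genuinely different route. Where the paper obtains the trace identity by invoking the K3 correspondence (Theorem \ref{correspondence of three items}, hence Theorem \ref{theorem1} and the fact that $\Aut(X_{L_m(a)})\cong\Z$), you stay entirely inside classical Pell--Lucas theory: you parametrize the solutions of each equation by $(\pm V_j,\pm U_j)$ via the fundamental solution (this is exactly the paper's Lemma \ref{Pell equation}, since $(P_i,1)$ is the fundamental solution of $u^2-(P_i^2+4)v^2=-4$), and you use the doubling identity $V_{2m}=V_m^2-2(-1)^m$ in place of Lemma \ref{properties of Fibonacci seq}(3). (Your version of the identity, $\trace(M_1^{2m})=x^2-2(-1)^m$, agrees with the computation in the paper's own proof; the ``$x^2+(-1)^m2$'' in the statement is a sign slip.) The more substantive difference is in the key structural step. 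The paper shows that every trace coincidence is a multiple of the minimal one by a descent: similarity of $M_1^{2k}$ and $M_2^{2l}$ gives $\lambda^{2k}=\mu^{2l}$, one subtracts the minimal exponents and repeats. You instead reduce $\trace(M_1^{2m})=\trace(M_2^{2n})$ to the exponential equation $\lambda^{2m}=\nu^{2n}$ by monotonicity of $t\mapsto t+t^{-1}$ and then use that the unit group of the common real quadratic field has rank one, so the solution set of $ms=nt$ is visibly the set of positive multiples of one primitive pair. This is cleaner and more structural than the descent, and it makes the whole theorem independent of the K3 machinery; what it gives up is precisely the paper's point of illustrating that machinery. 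Your remark that Theorem \ref{system of Diophantine equations} does not apply because the $x^2$- and constant-columns give a vanishing $2\times2$ minor is correct and worth keeping.

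The one step you flag but do not close---the parity of the minimal pair---can be settled cleanly rather than by case analysis, and this is worth doing since the paper's proof also passes over it silently. Both $\lambda$ and $\nu$ have norm $-1$ (this is exactly where $Q_i=-1$, i.e.\ the $+4$ in $P_i^2+4$, enters). Writing $\lambda=\eta^{s}$ and $\nu=\eta^{t}$ with $\eta$ the fundamental unit of the common field, taking norms gives $(-1)=N(\eta)^{s}=N(\eta)^{t}$, so $N(\eta)=-1$ and $s,t$ are both odd; hence the primitive pair $(m_0,n_0)=\bigl(t/g,\,s/g\bigr)$, $g=\gcd(s,t)$, has both entries odd. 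Consequently $km_0\equiv kn_0 \pmod 2$ for every $k$, so every multiple of $(m_0,n_0)$ produces a genuine common solution with the consistent sign $(-1)^{km_0}$, and the set of common $x$-values is exactly $\{V_k(\lambda^{m_0}+(-\lambda)^{-m_0},(-1)^{m_0})\}_{k\geq 0}$. Without this remark your argument (and the paper's) would leave open the possibility of a mixed-parity minimal trace pair, in which case only its even multiples would correspond to solutions of the system and the asserted Lucas sequence would have to be replaced by its even-index subsequence.
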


\begin{proof}
The system of equations \eqref{the first system} reduces to the following system of equations
\begin{equation}\label{reduced system}
  \left\{\begin{array}{@{}l@{}}
    x^2-(P_1^2+4)y^2=\pm 4\\
    (P_1^2+4)y^2-(P_2^2+4)z^2=0
  \end{array}\right.\,.
\end{equation}
If $(y,z)$ is non-trivial, then $(P_1^2+4)(P_2^2+4)$ is a square. Thus, if it is not square, we have only one solution $(x,y,z)=(2,0,0)$. 

For the case where $(P_1^2+4)(P_2^2+4)$ is a square, we have $x^2\mp 2=(P_1^2+4)y^2\pm 2=(P_2^2+4)z^2\pm 2$. Now, by Theorem \ref{correspondence of three items}, $x^2\mp 2$ is the trace of $M_1^{2m}=M_2^{2n}$ for some $m$ and $n$.
Moreover, the signs are determined by the parity of $m$ and $n$. Indeed, they have the same parity, and if they are even (odd, resp.), then we have $x^2-2 (x^2+2, \text{ resp.})$, and the corresponding automorphisms are both symplectic (anti-symplectic, resp.). 

Since the automorphism group $\Aut(X_L)\cong \Z$ for such a K3 surface $X_L$, where $L$ is in \eqref{L_m(a)}, we have the minimal numbers $m$ and $n$ such that the traces of $M_1^{2m}$ and $M_2^{2n}$ are the same. Note that if the trace of $M_1^{2k}=M_2^{2l}$ for $k\geq m$ and $l\geq n$, then $k$ and $l$ are the multiples of $m$ and $n$, respectively. Indeed, since $M_1^{2m}$ and $M_2^{2n}$ are similar, the eigenvalues are the same. If we let $\lambda, -\lambda^{-1}$ and $\mu, -\mu^{-1}$ be the eigenvalues of $M_1$ and $M_2$, respectively, then we have $\lambda^{2m}=\mu^{2n}$ and $\lambda^{-2m}=\mu^{-2n}$. Similarly, $\lambda^{2k}=\mu^{2l}$ and $\lambda^{-2k}=\mu^{-2l}$. Hence, the trace of $\trace M_1^{2k-2m}=\lambda^{2k-2m}+\lambda^{-2k+2m}=\mu^{2l-2n}+\mu^{-2l+2n}=\trace M_2^{2l-2n}$. Repeating the same argument, we will have $\trace M_1^{2k-4m}=\trace M_2^{2l-4n}$ and so on. So, we know that $k$ and $l$ are the multiples of $m$ and $n$ because $m$ and $n$ are the minimal numbers.

The first common solution is $(x,y,z)=(2,0,0)$, i.e., $x_0=2$. The next two solutions $x_1, x_2$ in the $x$-coordinate satisfy 
\begin{align*}
&x_1^2-(-1)^m2=\trace(M_1^{2m})=a_{2m-1}+a_{2m+1}=(a_1^2+4)a_m^2+(-1)^m2=\lambda^{2m}+\lambda^{-2m},\\
&x_2^2-(-1)^{2m}2=\trace(M_1^{4m})=a_{4m-1}+a_{4m+1}=(a_1^2+4)a_{2m}^2+(-1)^{2m}2=\lambda^{4m}+\lambda^{-4m},
\end{align*}
where $a_k=U_k(P_1, -1)$, i.e., $a_0=0, a_1=1, a_k=P_1\cdot a_{k-1}+a_{k-2}$.

Hence 
\begin{align*}
x_0=2,& ~\ x_1=\lambda^{m}+(-1)^m\lambda^{-m}, \\
x_2=\lambda^{2m}+\lambda^{-2m},& ~\ x_k=\lambda^{km}+(-1)^{km}\lambda^{-km} ~\ \text{ for } k\in \Z_{\geq 0}.
\end{align*}

So, the set of solutions in the $x$-coordinate of \eqref{the first system} is the Lucas sequence $\{V_n(\lambda^m+(-\lambda)^{-m}, (-1)^{m})\}$. Moreover, it satisfies the Pell's equation $x^2-Dy^2=\pm 4$, where $D=(\lambda^m+(-\lambda)^{-m})^2-4(-1)^{m}$. 
\end{proof}

\begin{remark} Since the determinant of $M_1$ is $-1$, $D=(\lambda^m+(-\lambda)^{-m})^2-4(-1)^{m}=(\lambda^m-(-\lambda)^{-m})^2$ is not a square of an integer. So, Pell's equation $x^2-Dy^2=\pm 4$ has infinitely many solutions. Moreover, note that the Lucas sequence $\{V_n(\lambda^m+(-\lambda)^{-m}), (-1)^{m})\}=\{V_n(\mu^n+(-\mu)^{-n}, (-1)^{n})\}$, where $\mu, -1/\mu$ are the eigenvalues of $M_2$. 

This result is also known by Alekseyev in \cite[Theorem 8]{A}. He found the solutions of \eqref{the first system} by finding $D=(P_1^2+4)(P_2^2+4)/(\gcd(P_1^2+4, P_2^2+4))$ of Pell's equation.
\end{remark}

\begin{theorem}\label{intersection of V_n2}
Consider the system of equations,
\begin{equation}\label{the second system}
  \left\{\begin{array}{@{}l@{}}
    x^2-(P_1^2-4)y^2= 4\\
    x^2-(P_2^2-4)z^2= 4
  \end{array}\right.\,, (P_1\neq P_2 ~\ \text{ and } ~\ P_1, P_2\geq 4).
\end{equation}
\begin{enumerate}
\item If $(P_1^2-4)(P_2^2-4)$ is not square, then the only solution is $(x,y,z)=(2,0,0)$.
\item If $(P_1^2-4)(P_2^2-4)$ is a square then for any solution $(x,y,z)$, there are even numbers $m,n\in \Z_{>0}$ such that $\trace(N_1^{2n})=\trace(N_2^{2m})$, where 
\begin{equation*}
N_1=\begin{pmatrix} 0& 1\\ -1&P_1\end{pmatrix} \text{ and } \hspace{0.2mm} N_2=\begin{pmatrix} 0& 1\\ -1&P_2\end{pmatrix}.
\end{equation*} 

Moreover, if $m$ and $n$ are the minimal such numbers, then the solution set of the $x$-coordinate forms a Lucas sequence $\{V_n(\lambda^m+\lambda^{-m}, 1)\}$, where $\lambda$ and $1/\lambda$ are the eigenvalues of $N_1$.
\end{enumerate}
\end{theorem}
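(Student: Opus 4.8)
The plan is to follow the outline of the proof of Theorem \ref{intersection of V_n} almost verbatim, replacing the $Q=-1$ data (the matrices $M_i$, the sequence $U_n(P_i,-1)$, and the $\pm4$ Pell equations) by the $Q=+1$ data (the matrices $N_i$, the sequence $U_n(P_i,1)$, and the positive Pell equation). First I would reduce \eqref{the second system} by subtracting the two equations, obtaining the equivalent pair consisting of $x^2-(P_1^2-4)y^2=4$ together with $(P_1^2-4)y^2=(P_2^2-4)z^2$. If $(y,z)$ is nontrivial then, since $P_i^2-4>0$, both $y$ and $z$ are nonzero, and $(P_1^2-4)(P_2^2-4)=\bigl((P_2^2-4)z/y\bigr)^2$ is the square of a rational number, hence a perfect square. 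This proves (1) by contraposition: if $(P_1^2-4)(P_2^2-4)$ is not a square, the only solution is $(x,y,z)=(2,0,0)$.

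For (2), assume $(P_1^2-4)(P_2^2-4)$ is a square. Rewriting the equations as $x^2-2=(P_1^2-4)y^2+2=(P_2^2-4)z^2+2$, I would invoke the $b_n$-branch of Theorem \ref{correspondence of three items} (equivalently Remark \ref{Whitney theorem for b_k}) to conclude $y=U_n(P_1,1)$ and $z=U_m(P_2,1)$ for suitable $n,m$, and then apply Lemma \ref{properties of b_n}(3) to write $x^2-2=(P_1^2-4)U_n(P_1,1)^2+2=b_{2n+1}-b_{2n-1}$. Since by Remark \ref{M_a and N_b} one has $\trace N_b^{k}=b_{k+1}-b_{k-1}=\lambda^{k}+\lambda^{-k}$, this reads $x^2-2=\trace N_1^{2n}=\trace N_2^{2m}$. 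The essential simplification relative to the $M_i$ case is that $\det N_i=+1$: the eigenvalues of $N_i$ are $\lambda,\lambda^{-1}$ (real and positive because $P_i\ge 4>2$), so there is no sign alternation, every relevant automorphism of $X_{L(P_i)}$ is symplectic, and the trace is always $x^2-2$ (never $x^2+2$). This is also why both equations carry $+4$ rather than $\pm4$, and why part (2) can have infinitely many solutions without contradicting Theorem \ref{system of Diophantine equations} (a $2\times2$ minor degenerates precisely when $(P_1^2-4)(P_2^2-4)$ is a square).

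Next I would pin down the generation of all solutions. By Example \ref{example of GLP}, $\Aut(X_{L(P_i)})\cong\Z\rtimes\Z_2$ with the free part generated by $C^2=(N_i^{\transpose})^2$; the symplectic automorphisms thus form an infinite cyclic group, which yields minimal positive integers $m,n$ realizing $\trace N_1^{2n}=\trace N_2^{2m}$. To see these minimal exponents generate everything, I would run the eigenvalue descent already used for the $M_i$: similarity of $N_1^{2n}$ and $N_2^{2m}$ (same trace $x^2-2$, same determinant $1$) forces $\lambda^{2n}=\mu^{2m}$ and $\lambda^{-2n}=\mu^{-2m}$, where $\mu,\mu^{-1}$ are the eigenvalues of $N_2$; for any further common power $\trace N_1^{2k}=\trace N_2^{2l}$ one subtracts exponents to get $\trace N_1^{2(k-n)}=\trace N_2^{2(l-m)}$ and iterates, forcing $k,l$ to be multiples of $n,m$. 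Consequently the $x$-coordinates are exactly $x_k=\lambda^{kn}+\lambda^{-kn}$ for $k\ge0$, and setting $\Lambda=\lambda^{n}$ these satisfy $x_k=\Lambda^k+\Lambda^{-k}$, i.e. they constitute the Lucas sequence $\{V_k(\Lambda+\Lambda^{-1},1)\}=\{V_k(\lambda^{n}+\lambda^{-n},1)\}$ with $\lambda,\lambda^{-1}$ the eigenvalues of $N_1$ and $n$ the minimal $N_1$-exponent, which is the sequence claimed in the statement.

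The main obstacle I expect is the descent step establishing minimality and generation: one must argue cleanly that the relation $\lambda^{2n}=\mu^{2m}$ propagates to all common powers and that no coincidence of traces occurs outside multiples of the minimal exponents. Here the positivity of $\lambda,\mu$ (from $\det N_i=+1$ and $P_i\ge 4$) makes the map $t\mapsto t+t^{-1}$ strictly monotone on $(1,\infty)$, so equality of traces is equivalent to equality of the dominant eigenvalues; this monotonicity is exactly what makes the descent rigorous and is the point to verify with care. Everything else is a direct transcription of the $a_n$ argument, using Lemma \ref{properties of b_n} in place of Lemma \ref{properties of Fibonacci seq}.
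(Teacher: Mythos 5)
Your proposal is correct and takes essentially the same route as the paper: the same reduction of \eqref{the second system} for part (1), the same appeal to Theorem \ref{correspondence of three items} and Lemma \ref{properties of b_n} to identify $x^2-2$ with $\trace N_1^{2n}=\trace N_2^{2m}$, and the same minimal-exponent eigenvalue descent (transcribed from the proof of Theorem \ref{intersection of V_n}) to conclude that the $x$-coordinates form the sequence $\{V_k(\lambda^{n}+\lambda^{-n},1)\}$; your monotonicity observation for $t\mapsto t+t^{-1}$ on $(1,\infty)$ is in fact a cleaner justification of that descent than the paper gives. Two minor caveats: you never actually derive the statement's claim that $m,n$ can be taken even (the paper extracts this, rather loosely, from the fact that all the automorphisms involved are symplectic --- an observation you also make), and your parenthetical about Theorem \ref{system of Diophantine equations} is inaccurate, since the $2\times 2$ minor formed by the $x^2$-coefficients and the constants $4$ vanishes for every system of the form \eqref{the second system}, whether or not $(P_1^2-4)(P_2^2-4)$ is a square.
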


\begin{proof}
Using the same argument of the proof of Theorem \ref{the first system}, we have the result. In this case, the discriminants of the Pell equations in \eqref{the second system} are $P_i^2-4$, $i=1,2$, and hence we use the matrix representation in \eqref{generalized Fibonacci by matrix with Q=1}. 
    
If $(P_1^2-4)(P_2^2-4)$ is not square, we have only one solution $(x,y,z)=(2,0,0)$. 
For the case where $(P_1^2-4)(P_2^2-4)$ is a square, we have $x^2-2=(P_1^2-4)y^2+2=(P_2^2-4)z^2+2$. Now, by Theorem \ref{correspondence of three items}, $x^2-2$ is the trace of $N_1^{2m}=N_2^{2n}$ for some $m$ and $n$. We have only $x^2-2$ forms of traces of powers of $M_i$, hence, the corresponding automorphisms are all symplectic, and $m$ and $n$ are even numbers.

Suppose that $m$ and $n$ are the minimal numbers such that the trace of $N_1^{2m}$ is the same as the trace of $N_2^{2n}$. Then, by the similar argument of proof of Theorem \ref{the first system}, the solution set of the $x$-coordinate forms a Lucas sequence. Indeed, the first solution in the $x$-coordinate is $x_0=2$. If we let $\lambda, \lambda^{-1}$ and $\mu, \mu^{-1}$ be the eigenvalues of $N_1$ and $N_2$, respectively, then the next two solutions $x_1, x_2$ in the $x$-coordinate satisfy 
\begin{align*}
x_1^2-2=&\trace(N_1^{2m})=b_{2m-1}+b_{2m+1}=(P_1^2-4)b_m^2+2=\lambda^{2m}+\lambda^{-2m},\\
x_2^2-2=&\trace(N_1^{4m})=b_{4m-1}+b_{4m+1}=(P_1^2-4)b_{2m}^2+2=\lambda^{4m}+\lambda^{-4m},
\end{align*}
where $b_k=U_k(P_1, 1)$, i.e., $b_0=0, b_1=1, b_k=P_1\cdot b_{k-1}-b_{k-2}$.

Hence 
\begin{align*}
x_0=2,& ~\ x_1=\lambda^{m}+\lambda^{-m}, \\
x_2=\lambda^{2m}+\lambda^{-2m},& ~\ x_k=\lambda^{km}+\lambda^{-km} ~\ \text{ for } k\in \Z_{\geq 0}.
\end{align*}

So, the set of solutions in the $x$-coordinate of \eqref{the second system} is the Lucas sequence $\{V_n(\lambda^m+\lambda^{-m}, 1)\}$. Moreover, it satisfies Pell's equation $x^2-Dy^2=\pm 4$, where $D=(\lambda^m+\lambda^{-m})^2-4$. 
\end{proof}

\begin{remark} Since the determinant of $M_1$ is $1$, $D=(\lambda^m+\lambda^{-m})^2-4=(\lambda^m-\lambda^{-m})^2$ is not a square of an integer. So, Pell's equation $x^2-Dy^2=\pm 4$ has infinitely many solutions. Moreover, note that the Lucas sequence $\{V_n(\lambda^m+\lambda^{-m}, 1)\}=\{V_n(\mu^n+\mu^{-n}, 1)\}$, where $\mu$ and $1/\mu$ are the eigenvalues of $N_2$. 
\end{remark}

\begin{theorem}\label{intersection of V_n3}
Consider a system of Diophantine equations,
\begin{equation}\label{the third system}
  \left\{\begin{array}{@{}l@{}}
    x^2-(a^2+4)y^2=4\\
    x^2-(b^2-4)z^2= 4, ~\ (b\geq 4)
  \end{array}\right.\,.
\end{equation}
\begin{enumerate}
\item If $(a^2+4)(b^2-4)$ is not square, then the only solution is $(x,y,z)=(2,0,0)$.
\item If $(a^2+4)(b^2-4)$ is a square then for any solution $(x,y,z)$,  there are $m,n\in \Z_{>0}$ such that $\trace(M^{2m})=\trace(N^{2n})$, where 
\begin{equation*}
M=\begin{pmatrix} 0& 1\\ 1&a\end{pmatrix} \text{ and } \hspace{0.2mm} N=\begin{pmatrix} 0& 1\\ -1&b\end{pmatrix}.
\end{equation*} Moreover, if $m$ and $n$ are the minimal such numbers, then the solutions of the system in the $x$-coordinate form a Lucas sequence $\{V_n(\lambda^m+\lambda^{-m}, 1)\}$, where $\lambda, -1/\lambda$ are the eigenvalues of $M$.
\end{enumerate}
\end{theorem}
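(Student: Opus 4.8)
The plan is to follow the strategy used for Theorems \ref{intersection of V_n} and \ref{intersection of V_n2}, reducing the mixed system to a statement about traces of powers of $M$ and $N$ and then invoking Theorem \ref{correspondence of three items}. First I would subtract the two equations of \eqref{the third system}, both of which carry $+4$ on the right, to pass to the equivalent system
\begin{equation*}
  \left\{\begin{array}{@{}l@{}}
    x^2-(a^2+4)y^2=4\\
    (a^2+4)y^2-(b^2-4)z^2=0
  \end{array}\right.\,.
\end{equation*}
From $(a^2+4)y^2=(b^2-4)z^2$ one sees, by multiplying through by $b^2-4$, that any solution with $(y,z)\neq(0,0)$ forces $(a^2+4)(b^2-4)$ to be a perfect square. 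Hence in case (1) the only possibility is $y=z=0$, whence $x=\pm2$ and the unique solution $(2,0,0)$, which settles the first assertion.

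For case (2) I would rewrite the common value as $x^2-2=(a^2+4)y^2+2=(b^2-4)z^2+2$. Since $x^2=(a^2+4)y^2+4$ is a perfect square, Theorem \ref{theorem1} yields $y=a_m$ with $m$ \emph{even}, while Remark \ref{Whitney theorem for b_k} yields $z=b_n$. Invoking Lemmas \ref{properties of Fibonacci seq}(3) and \ref{properties of b_n}(3) together with the matrix powers of Remark \ref{M_a and N_b}, these read $x^2-2=(a^2+4)a_m^2+(-1)^m2=\trace(M^{2m})$ and $x^2-2=(b^2-4)b_n^2+2=\trace(N^{2n})$, so $\trace(M^{2m})=\trace(N^{2n})$, which is the first claim of (2).

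It then remains to run the minimality argument verbatim as in the earlier proofs. Both $M^{2m}$ and $N^{2n}$ have determinant $1$ (since $\det M=-1$ and $\det N=1$) and equal trace, so they share a characteristic polynomial; as the trace exceeds $2$ for positive exponents, their eigenvalues coincide, giving $\lambda^{2m}=\mu^{2n}$, where $\lambda,-1/\lambda$ are the eigenvalues of $M$ and $\mu,1/\mu$ those of $N$. Taking $m,n$ minimal, if $\trace(M^{2k})=\trace(N^{2l})$ with $k\geq m$ and $l\geq n$, then dividing the eigenvalue identities by $\lambda^{2m}=\mu^{2n}$ gives $\trace(M^{2(k-m)})=\trace(N^{2(l-n)})$; iterating forces $m\mid k$ and $n\mid l$. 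Therefore the $x$-coordinates of all solutions are $x_k=\lambda^{km}+\lambda^{-km}$ with $x_0=2$, and since the base eigenvalues $\lambda^m,\lambda^{-m}$ have sum $\lambda^m+\lambda^{-m}$ and product $1$, this is exactly the Lucas sequence $\{V_n(\lambda^m+\lambda^{-m},1)\}$.

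The hard part, exactly as in the homogeneous cases, is the minimality/divisibility step, on which the completeness of the solution list rests. The genuinely new bookkeeping is reconciling the $Q=-1$ family $a_n$ with the $Q=1$ family $b_n$: one must check that, although $\det M=-1$ differs from $\det N=1$, the common $+4$ sign on both equations keeps the relevant trace in the form $x^2-2$ throughout, so that $x_1=\lambda^m+\lambda^{-m}$ and the resulting sequence has parameter $Q=1$ (consistency also forcing the exponent $m$ to be even).
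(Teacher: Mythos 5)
Your proposal is correct and follows essentially the same route as the paper: the same reduction of the system in case (1), the identification $x^2-2=\trace(M^{2m})=\trace(N^{2n})$ via the Whitney-type criteria (which is exactly what Theorem \ref{correspondence of three items} packages), and the same minimality/divisibility argument on eigenvalues yielding the Lucas sequence $\{V_n(\lambda^m+\lambda^{-m},1)\}$. The only cosmetic difference is that you justify $\lambda^{2m}=\mu^{2n}$ by equality of trace and determinant (hence of characteristic polynomials), where the paper appeals to similarity of the two matrices; the content is the same.
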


\begin{proof}
Using the same argument of the proof of Theorem \ref{the first system}, we have the result. 
    
If $(a^2+4)(b^2-4)$ is not square, we have only one solution $(x,y,z)=(2,0,0)$. 
For the case where $(a^2+4)(b^2-4)$ is square, we have $x^2-2=(a^2+4)y^2+2=(b^2-4)z^2+2$. Now, by Theorem \ref{correspondence of three items}, $x^2-2$ is the trace of $M^{2m}=N^{2n}$ for some $m$ and $n$. We have only $x^2-2$ forms of traces of powers of $M$, hence, the corresponding automorphisms are all symplectic, and $m$ is an even number.

Suppose that $m$ and $n$ are the minimal numbers such that $\trace M^{2m}=\trace N^{2n}$. Then, by the similar argument of proof of Theorem \ref{the first system}, the solution set of the $x$-coordinate forms a Lucas sequence. Indeed, the first solution in the $x$-coordinate is $x_0=2$. If we let $\lambda, -\lambda^{-1}$ and $\mu, \mu^{-1}$ be the eigenvalues of $M$ and $N$, respectively, then the next two solutions $x_1, x_2$ in the $x$-coordinate satisfy 
\begin{align*}
x_1^2-2=&\trace(M^{2m})=a_{2m-1}+a_{2m+1}=(a^2+4)a_m^2+2=\lambda^{2m}+\lambda^{-2m},\\
x_2^2-2=&\trace(M^{4m})=a_{4m-1}+a_{4m+1}=(a_1^2+4)a_{2m}^2+2=\lambda^{4m}+\lambda^{-4m},
\end{align*}
where $a_k=U_k(a, -1)$, i.e., $a_0=0, a_1=1, a_k=a\cdot a_{k-1}+a_{k-2}$.

Hence 
\begin{align*}
x_0=2,& ~\ x_1=\lambda^{m}+\lambda^{-m}, \\
x_2=\lambda^{2m}+\lambda^{-2m},& ~\ x_k=\lambda^{km}+\lambda^{-km} ~\ \text{ for } k\in \Z_{\geq 0}.
\end{align*}

So, the solution set in the $x$-coordinate of \eqref{the third system} is the Lucas sequence $\{V_n(\lambda^m+\lambda^{-m}, 1)\}$. Moreover it satisfies the Pell's equation $x^2-Dy^2=\pm 4$, where $D=(\lambda^m+\lambda^{-m})^2-4$. 
\end{proof}

\begin{remark} Since the determinant of $M$ is $-1$, $D=(\lambda^m+\lambda^{-m})^2-4=(\lambda^m-\lambda^{-m})^2$ is not a square of an integer. So Pell's equation $x^2-Dy^2=4$ has infinitely many solutions. Moreover, note that $\{V_n(\lambda^m+\lambda^{-m}, 1)\}=\{V_n(\mu^n+\mu^{-n}, 1)\}$, where $\mu$ and $1/\mu$ are the eigenvalues of $N$. 
\end{remark}

\begin{remark}
The following system of equations
\begin{equation}\label{the fourth system}
  \left\{\begin{array}{@{}l@{}}
    x^2-(P_1^2+4)y^2=\pm 4\\
    x^2-(P_2^2+4)z^2=\mp 4
  \end{array}\right.\,, (P_1\neq P_2),
\end{equation}
has only finite solutions by Theorem \ref{system of Diophantine equations}.
\end{remark}

\subsection*{Acknowledgements}
The author was supported by the National Research Foundation of Korea(NRF-2022R1A2B5B03002625) grant funded by the Korea government(MSIT).





\begin{thebibliography}{99}
\bibitem{A}
Alekseyev, M.A.: On the intersections of Fibonacci, Pell, and Lucas numbers. Integers. {\bf 11}, 239-259 (2010)
\bibitem{B}
Buell, D. A.: Binary quadratic forms: classical theory and modern computations. Springer-Verlag, New York (1989)
\bibitem{D}
Dickson, L.E.: Introduction to the theory of numbers, Dover, 1954.
\bibitem{GLP}
Galluzzi, F., Lombardo, G., Peters, C.: Automorphs of indefinite binary quadratic forms and K3-surfaces with Picard number 2. Rend. Sem. Mat. Univ. Politec. Torino. {\bf 68} (1), 57-77 (2010)
\bibitem{HKL} 
Hashimoto, K., Keum, J., Lee, K.: K3 surfaces with Picard number 2, Salem polynomials and Pell equation. J. Pure Appl. Algebra. {\bf 224}, 432-443 (2020)
\bibitem{J}
Jones, B.W.: The arithmetic theory of quadratic forms. Carcus Monograph Series, no. 10. The Mathematical Association of America, Buffalo, N.Y., 1950.
\bibitem{L1}
Lee, K.: Automorphisms of some K3 surfaces and Beauville involutions of their Hilbert schemes. Rend. Circ. Mat. Palermo, II. Ser. {\bf 72} (4), 2443-2457 (2023)
\bibitem{L2}
Lee, K.: Generalized Fibonacci numbers and automorphisms of K3 surfaces with Picard number $2$, submitted.
\bibitem{N}
Nikulin, V.: Integral symmetric bilinear forms and some of their applications. Math. USSR Izv. {\bf 14}, 103-167 (1980)
\bibitem{S}
Silvester, J. R.: Fibonacci properties by matrix methods. Math. Gazette. {\bf 63} (425), 188-191 (1979)
\bibitem{W}
Whitney, R.E.: Advanced problems and solutions. The Fibonacci Quarterly, {\bf 10} (4), 413-422 (1972)
\end{thebibliography}
\end{document}